\def\NN{\hbox{I\kern-.2em\hbox{N}}}
\def\RR{{\mathop{{\rm I}\kern-.2em{\rm R}}\nolimits}}
\def\KK{{\bf K}}
\def\f{{\bf F}}
\def\g{{\bf g}}
\def\n{{\bf n}}
\def\x{{\bf x}}
\def\y{{\bf y}}
\def\w{{\bf w}}
\def\lam{{\lambda}}
\def\bfax{\mbox{\boldmath$\alpha$}}
\def\bfbx{\mbox{\boldmath$\beta$}}
\def\bfeta{\mbox{\boldmath$\eta$}}
\def\bmu{\mbox{\boldmath$\mu$}}
\def\btau{\mbox{\boldmath$\tau$}}
\def\bphi{\mbox{\boldmath$\phi$}}
\definecolor{orange}{rgb}{1,0.5,0}
\newcommand{\vertiii}[1]{{\left\vert\kern-0.25ex\left\vert\kern-0.25ex\left\vert #1
    \right\vert\kern-0.25ex\right\vert\kern-0.25ex\right\vert}}
\newcommand{\be}{\begin{equation}}
\newcommand{\ee}{\end{equation}}
\newcommand{\ba}{\begin{eqnarray}}
\newcommand{\ea}{\end{eqnarray}}
\newcommand{\supp}{\mathop{\mathrm{supp}}}
\newtheorem{rmk}{Remark}
\newtheorem{prn}{Proposition}
\begin{document}

\begin{frontmatter}

\title{An adaptive IGA-BEM with hierarchical B-splines based on quasi--interpolation quadrature schemes}


\author[label1]{Antonella Falini}
\address[label1]{INdAM c/o Department of Mathematics and Computer Science, University of
Firenze,\\
Viale Morgagni 67, Firenze, Italy }
\ead{antonella.falini@unisi.it}
\author[label2]{Carlotta Giannelli}
\address[label2]{Department of Mathematics and Computer Science, University of
Firenze,\\
Viale Morgagni 67, Firenze, Italy }
\ead{carlotta.giannelli@unifi.it}
\author[label1]{Tadej Kandu\v{c}}
\ead{tadej.kanduc@unifi.it}
%
\author[label3]{Maria Lucia Sampoli}
\ead{marialucia.sampoli@unisi.it}
\address[label3]{Department of Information Engineering and Mathematics, University of
Siena,\\
Via Roma 56, Siena, Italy}
\author[label2]{Alessandra Sestini}
\ead{alessandra.sestini@unifi.it}

\begin{abstract}
 
The isogeometric formulation of Boundary Element Method (BEM) is investigated within the adaptivity framework. Suitable weighted quadrature rules to evaluate integrals appearing in the Galerkin BEM formulation of 2D Laplace model problems are introduced.  The new quadrature schemes are based on a spline quasi-interpolant (QI) operator and properly framed in the hierarchical setting. The local nature of the QI perfectly fits with hierarchical spline constructions and leads to an efficient and accurate numerical scheme. An automatic adaptive refinement strategy is driven by a residual based error estimator.  Numerical examples show that the optimal convergence rate of the BEM solution  is recovered by the proposed adaptive method.

\end{abstract}

\begin{keyword}

isogeometric analysis\sep boundary element method\sep quadrature formulas\sep quasi--interpolation\sep hierarchical B-splines\sep local refinement.
\end{keyword}

\end{frontmatter}

\section{Introduction}\label{sec:intro}

Boundary Element Methods (BEMs) are methods studied since the mid 1980s for the numerical solution of those Boundary Value Problems (BVPs), which can be transformed into Boundary Integral Equations (BIEs), see, e.g., \cite{BEMbook} for a recent overview. A common reference example is the Laplacian differential operator, but the theory can be extended also to more general partial differential equations, like the Helmholtz equation and the Stokes equations, which have applications in acoustics and fluid dynamics, respectively.

Boundary element methods have two main advantages: the dimension reduction of the computational domain and the simplicity for treating external problems. As a major drawback, the resulting integrals can be singular and therefore
robust and accurate quadrature formulas are necessary for their numerical computation. The solution is then obtained by collocation or Galerkin procedures.

The advent of \emph{Isogeometric Analysis} (IgA) \cite{ref2,LibroHughes} has brought a renewed interest in BEMs. In the IgA approach, a tight relation between the geometry of the domain and the representation of the approximate solution of the differential problem is established. In particular, IgA relies on a spline description of the domain, which is standard in Computer Aided Design (CAD), and on the usage of (possibly generalized and refined) analogous   spaces for the discretization of the differential problem.

In order to reduce complexity and gain efficiency, the use of collocation \cite{TauRodHug} or mixed collocation is the most common solution, see, e.g., \cite{Ginnis,  Simp2012}.
Indeed the Galerkin method has been generally avoided because it requires a double integration process, which appear difficult to evaluate efficiently. There are however applications, for example in crack propagation
problems, elasticity, elastodynamics, etc., where the use of a
Galerkin method may give some important advantages. Papers  using the IgA Galerkin BEM approach can be found in the literature, dealing with problems in acoustics \cite{Coox, SimpsonScott} or  flows \cite{politis, Joneidi}. BEM formulation has been used also to construct computational domains for Galerkin-IgA \cite{Falini}.
Recently, the IgA paradigm has been combined for the first time to the Symmetric Galerkin Boundary Element Method (IgA-SGBEM)\cite{ADSS1, ADSS2, Nguyen16}, which has revealed to be very effective among BEM schemes. Moreover, the full potential of B-splines over the more common Lagrangian basis has been recently exploited in \cite{ACDS3}.

When dealing with problems characterized by solutions with sharp features, adaptivity is a key ingredient to efficiently solve them; it requires suitable error estimations, as well as efficient local refinement procedures. While adaptive BEM have been widely studied in the literature, see, e.g., \cite{feischl2015arcme} for a recent review, the theory of adaptivity for isogeometric boundary element methods is still at a preliminary stage. A posteriori error analysis and refinement algorithms in the 2D setting have been presented in \cite{feischl2015reliable,feischl2016}. In \cite{feischl2017}, the optimal convergence of adaptive IgA-BEM for weakly singular equations was also proven. In all these studies, the adaptive scheme relies on the locally refinable nature of classical univariate B-splines, the standard spline basis adopted in CAD and IgA. In higher dimensions, however, the tensor-product B-spline structure does not provide local refinement capabilities and alternative spline spaces need to be considered.

One of the prominent approaches in the design and analysis of adaptive isogeometric methods exploits the multilevel structure of hierarchical B-splines, see, e.g., \cite{vuong2011}. A hierarchical B-spline space is constructed from a nested sequence of B-spline spaces, defined on different levels of resolution and on 
strictly localized parts of the domain. Non-uniform mesh configurations can be considered at different hierarchical levels. However, the adaptive nature of the spline hierarchy is usually considered on nested sequences of dyadically refined knots to simplify and speed up computations, while simultaneously providing an efficient adaptive framework. For the same reasons, the uniform configuration of hierarchical spline spaces based on a dyadic refinement is attractive also for a numerical treatment of 2D problems in the adaptive BEM.
\medskip

In this paper we present an adaptive IgA-BEM with hierarchical B-splines based on local quadrature schemes to solve 2D Laplace problems. The uniform structure of B-splines at different levels can be properly exploited in the assembly of the discretization matrices. In particular, the new quadrature schemes based on spline quasi--interpolation 
are investigated in this paper in the context of boundary integral equations and properly framed in the hierarchical setting.

The term {\it quasi--interpolation} (\emph{QI}) denotes a general approach to construct efficient local approximants to a given set of data or a given function; see \cite{LS75} for a general introduction to spline quasi--interpolation.
The quadrature rules adopted here are based on a quasi--interpolation operator firstly introduced in \cite{MSbit09} and applied to non singular numerical integration in \cite{MS12}. Then, the same QI-based idea was adopted in \cite{CFSS18} to develop efficient and competitive quadratures for singular integrals appearing in the IgA-BEM context.   The rule is effective also for nearly singular integrals.
Although the use of spline quasi--interpolation for numerical integration was already studied in several papers \cite{rabinowitz1990numerical, dagnino1993numerical, dagnino1997product, demichelis1996quasi}, its introduction in the IgA context is a novelty. In particular, in this work, the QI formulas derived in \cite{MS12} and \cite{CFSS18} are integrated in a Galerkin BEM model and suitably framed into a hierarchical adaptive scheme. In fact, the Galerkin Boundary Element Method is combined with an automatic adaptive refinement strategy driven by a residual based error estimator.  Numerical examples show the optimal convergence rates achieved by the hierarchical isogeometric scheme.


The structure of the paper is as follows. Section~\ref{sec:sgbem} presents the integral formulation of the model problem. Section~\ref{sec:bsplines} introduces B-spline representations of the domain boundary, commonly used in computer aided design systems. Hierarchical spline constructions and the definition of the isogeometric discretization are reviewed in Section~\ref{sub:hier} and \ref{sec:iga}, respectively. The adaptive scheme is summarized in Section~\ref{sec:adapt}. Section~\ref{sec:quad} introduces the quadrature formulas based on spline quasi--interpolation. In Section~\ref{sec:num} the developed  model is applied to an exterior and two interior 2D Laplace problems, all  suited for adaptivity. Finally, Section~\ref{sec:conc} concludes the paper.

\section{Integral formulation of the problem}\label{sec:sgbem}
In this work we focus on 2D exterior and interior Laplace model problems on
planar domains, assuming boundary Cauchy data of Dirichlet type. Two different geometries are considered: unbounded domains external to an open arc, and bounded simply connected domains.

In both cases the boundary of the domain $\Omega$ is a planar bounded curve $\Gamma$ without self-intersections. 
The boundary $\Gamma$ is described as the image of a regular invertible function $\f : [a,b]\rightarrow
\Gamma \subset \RR^2$, 
where $[a,b] \subset \RR$ is the parametric domain\footnote{Note that in case of a closed curve $\Gamma$, $\f : [a,b)\rightarrow\Gamma\subset\RR^2$, since $\f(a) =\f(b)$.}. 

When unbounded domains are considered, the differential problem is the following,
\begin{equation} \label{BVPsegment}
\left\{ \begin{array}{ll}
\Delta u=0& {\rm in}\; \Omega = \RR^2\setminus \Gamma,\\
u=u_{D}& {\rm on}\; \Gamma,
\end{array} \right.
\end{equation}
where the solution $u$ belongs to the Sobolev space $H^1(\Omega)$ and $u_D$ is the Dirichlet boundary datum, with $u_D\in H^{1/2}(\Gamma)$, the trace space of $H^1(\Omega)$. The BVP in (\ref{BVPsegment}) can model a variety of engineering problems including elasticity, fracture mechanics and acoustic (see for instance \cite{Coox,Nguyen16,Stephan84}) formulated on infinite domains.

In the second case, when $\Omega$ is a simply connected planar domain with a weakly Lipschitz boundary $\Gamma$, we deal with the interior Dirichlet problem
\begin{equation} \label{BVP_esse}
\left\{ \begin{array}{ll}
\Delta u=0& {\rm in}\; \Omega,\\
u=u_{D}& {\rm on}\; \Gamma,
\end{array} \right.
\end{equation}
where $u\in H^1(\Omega)$ and $u_D\in H^{1/2}(\Gamma)$ denotes again the given boundary datum.

The boundary element method uses the representation formula to evaluate the solution $u$ at any point $\x$ inside $\Omega$,
\begin{equation} \label{repformula}
u(\x)=-\frac{1}{2\,\pi}\int_{\Gamma}
U(\x,\y) \, \phi(\y) \, d\gamma_\y+\frac{1}{2 \pi} \, \int_{\Gamma}
\frac{\partial U}{\partial \n_y}(\x,\y) \, u_{D}(\y)\, d
\gamma_\y ,\quad \x \in \Omega,
\end{equation}
where 
$\bf n$ is the outward unit normal vector and $-\frac{1}{2\pi}\, U$ is the fundamental solution for the 2D Laplace operator, with
\begin{align*}\label{kernel}
U(\x\,,\,\y) := \log \|\x-\y\|_2.
\end{align*}
The function $\phi$ is the unknown and belongs to $H^{-1/2}(\Gamma)$, the dual space of $H^{1/2}(\Gamma)$, where the duality is defined with respect to the usual $L^2(\Gamma)$-scalar product.

In order to compute the missing boundary datum $\phi$, by applying a limiting process for $\x$ tending to $\Gamma$,  equation \eqref{repformula} allows us to derive a boundary integral equation (BIE) for both problems \eqref{BVPsegment} and \eqref{BVP_esse}.
In case of an infinite domain, an indirect approach \cite{chen} leads to the following BIE,
\be \label{prima_BIE}
-\frac{1}{2\,\pi}\int_{\Gamma}
U(\x,\y) \, \phi(\y)\, d\gamma_\y=u_{D}(\x),\quad \x \in \Gamma,
\ee
where the unknown $\phi\in H^{-1/2}(\Gamma)$ represents the jump of the flux of $u$.
In case of an interior problem, with a direct approach \cite{chen} we derive the BIE,

\be \label{seconda_BIE}
-\frac{1}{2\,\pi}\int_{\Gamma}
U(\x,\y) \, \phi(\y) \, d\gamma_\y=\frac{1}{2} \, u_{D}(\x)
-\frac{1}{2 \pi} \, \int_{\Gamma}
\frac{\partial U}{\partial \n_y}(\x,\y) \, u_{D}(\y)\, d
\gamma_\y ,\quad \x \in \Gamma,
\ee
with unknown $\phi\in H^{-1/2}(\Gamma)$ denoting the flux of $u$.
Both integrals equations \eqref{prima_BIE} and \eqref{seconda_BIE} are referred to as the Symm's integral equation
\be\label{Symmeq}
V\phi(\x) = f(\x),\quad \x\in\Gamma,
\ee
where $V:H^{-1/2}(\Gamma)\rightarrow H^{1/2}(\Gamma)$ is an elliptic isomorphism and corresponds to the operator
\begin{align*}
V\phi(\x) := - \frac{1}{2 \pi}\int_{\Gamma}U(\x,\y)\phi(\y)\,d\gamma_{\y}.
\end{align*}
The right hand side $f$ in \eqref{Symmeq} is given by $u_D$ in case of an indirect approach \eqref{prima_BIE}, or as the right hand side of \eqref{seconda_BIE} in case of a direct approach.

The variational formulation of \eqref{Symmeq} is \cite{Wendland2}:
\be \label{weakpb}
\text{\emph{given }}\ u_D \in H^{1/2}(\Gamma), \ \text{\emph{ find }}\ \phi \in H^{-1/2}(\Gamma)\; \ \text{\emph{ such that }}\
{\cal A}(\phi, \psi)={\cal F}(\psi),\quad \forall  \psi \in   H^{1/2}(\Gamma),
\ee
where the bilinear form ${\cal A}(\phi, \psi)$ and  right-hand side ${\cal F}(\psi)$ are defined as
\begin{align*} \label{bilf}
{\cal A}(\phi,\psi):= \int_{\Gamma} \psi(\x)\, V\phi(\x)\, d\gamma_\x,
\qquad
{\cal F}(\psi):= \int_{\Gamma} \psi(\x) \, f(\x)\, d\gamma_\x.
\end{align*}


\section{Isogeometric boundary element model}\label{sec:cagd}
In this section we summarize important properties of the model. First we give a quick overview of B-splines and its hierarchical extension. Then we reformulate equations from Section \ref{sec:sgbem} in a discrete form. In the last part we present important ingredients of the adaptive scheme.
\subsection{B-splines}\label{sec:bsplines}
A space of univariate B-splines of polynomial degree $d$ is uniquely defined by its \emph{knot} vector $ \mathbf T= \left\{t_1,\ldots,,t_{N+d +1}\right\}$. The knot vector defines $N$ spline basis elements. For the associated partition $\Theta$ on interval $[a,b]\subset\RR$ it holds
\begin{equation} \label{thetadef}
a = \theta_1 < \theta_2<\ldots <\theta_L = b,
\end{equation}
with $\theta_1=t_{d +1} = a$ and $\theta_L=t_{N+1} = b$. At every breakpoint $\theta_i$, for $i=2,\ldots,L-1$, the corresponding knots are repeated in the inner part of $\mathbf T$ with a multiplicity $m_i$, with $1\le m_i \le d +1$. The $d$ auxiliary knots on the left ($t_1,\ldots,t_{d}$) and on the right ($t_{N+2},\dots,t_{N+d +1}$) may be freely chosen, as long as they preserve the non-decreasing nature of the knot sequence, namely $t_i \le t_{i+1}$, for $i=1,\ldots N+d$. Note that the following spline dimension formula holds $N= d +1+{\sum_{i=2}^{L-1}}m_i$.

The \emph{B-spline} basis on $\mathbf T$ can be defined with the recursion formula \cite{deBoor01}
\begin{align*}\label{B_recurrence}
B_{i,0}(t) & := B_{i,0}^{(\mathbf T)}(t) := 
\left\{\begin{array}{ll}
1, & \quad\text{if}\; t_i\le t < t_{i+1},\\
0, & \quad \text{otherwise},
\end{array}\right. \\
B_{i,r}(t) & := \omega_{i,r}(t) B_{i,r-1}(t) + \left(1 - \omega_{i+1,r}(t)\right)B_{i+1,r-1}(t), \quad r=1,\ldots, d,
\end{align*}
where
\begin{align*}
\omega_{i,r} (t) := \left\{
\begin{array}{ll}
\frac{t-t_i}{t_{i+r}-t_i}, & \quad \text{if}\; t_i < t_{i+r},\\
0, & \quad \text{otherwise}.
\end{array}
\right.
\end{align*}
B-splines span a space of splines $S$,  whose smoothness is in general $C^{d-m_i}$ at the breakpoint $\theta_i$, for $i=2,\dots,L-1$.
B-splines have local support  --- the shape of $B_{i,r}$ depends only on its set of {\it active knots} $t_i,\ldots,t_{i+r+1}$ --- they are non-negative and (locally) linearly independent. The spline set $\{B_{i,d}\}_{i=1}^N$ also satisfies the partition of unity on $[a,b]$. The shape of B-splines defined on uniform partitions is shown in Figure~\ref{fig:bsplines1} for few low-degree cases.

\begin{figure}[t!]
\centering
\subfigure[Quadratic B-spline]{
\includegraphics[trim = 0.75cm 0.75cm 0.5cm 1cm, clip = true, height=1.25cm]{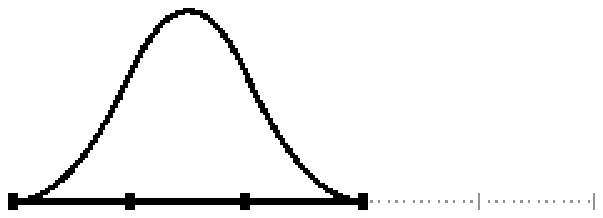}
}
\subfigure[Cubic B-spline]{
\includegraphics[trim = 0.75cm 0.75cm 0.5cm 1cm, clip = true, height=1.25cm]{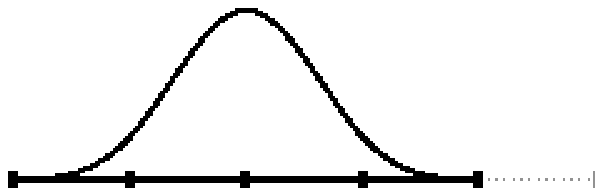}
}
\subfigure[Quartic B-spline]{
\includegraphics[trim = 0.75cm 0.75cm 0.5cm 1cm, clip = true, height=1.25cm]{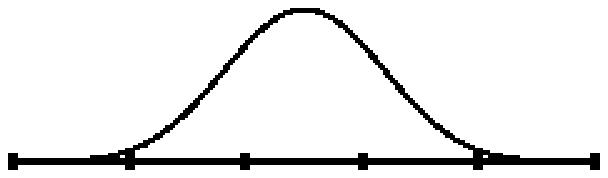}
}
\caption{B-splines of different degrees.}
\label{fig:bsplines1}
\end{figure}

Following the IgA paradigm, the boundary $\Gamma$ is parametrized by a parametric  B-spline curve $\f: [a,b] \rightarrow \RR^2$ written in the \emph{B-form}, 
\begin{align*} 
\f(\cdot) := \sum_{i=1}^N \mathbf{d}_i B_{i,d}(\cdot).
\end{align*}
The components of $\f$ belong to $S$ and $\left\{\mathbf{d}_i\right\}_{i=1,\ldots,N}$ is an ordered set of \emph{control points} in $\RR^2$. 
Thanks to the inherited properties of $S$, parametric B-spline curves are invariant to affine transformations and locally determined by only $d +1$ consecutive control points.
In addition, due to the properties of local support, partition of unity and non-negativity of the basis functions, any point on the B-spline curve lies in the convex hull of $d +1$ consecutive control points. This property is known as \emph{strong convex hull} and is of fundamental importance for geometric design applications.

To recover the interpolation of the first and the last control point, 
it is common to construct an \emph{open knot vector} by setting $t_1 = \ldots = t_{d} = a$ and $t_{N+2} = \ldots = t_{N+d+1} = b$. This is the standard choice to define open curves.

A periodic definition of the auxiliary knots is instead more convenient to represent closed curves, where $\f(a)=\f(b)$.
 In that case splines are thought to be periodic in a sense that 
a pair $\{B_{i,d}, B_{N-d+i,d}\}$, for $i=1,2,\dots,d$,
represents one shape function in the physical space. For the periodic compatibility it is sufficient that the $2d$ knot differences on the left are identical to the $2d$ ones on the right, $t_{i+1} - t_{i} = t_{i+N-d+1} - t_{i+N-d}$, for $i=1,\dots,2d$. Furthermore, the first $d$ control points of $\f$ need to coincide with the $d$ last ones.

\subsection{Hierarchical spline spaces}\label{sub:hier}

Adaptivity is of fundamental importance to obtain highly accurate solutions of numerical problems by increasing the number of degrees of freedom only in strictly localized regions.
In the IgA setting, the finite dimensional subspace used in the discretization of the differential problem is usually assumed coincident with $S$ or with a suitable enlargement of this space, in order to obtain sufficiently accurate approximations of the solution. In our approach we define such enlarged space through adaptive hierarchical $h$--{\it refinement} of fixed spline degree $d$.
We now review the construction of hierarchical B-spline spaces
needed for the development of the adaptive isogeometric boundary element method.	



In order to deal  with a sequence of uniform spline spaces at each refinement step, hierarchical  spaces, obtained by dyadic refinement, can be considered. Moreover, we assume a uniform partition $\Theta$ and  simple knots in $(a,b)$ in the associated knot vector $\mathbf T$. With this setting, a sequence of cardinal B-spline spaces, defined on uniform knot sequences at different level of details, can be constructed.

%
To properly define a spline hierarchy of this kind, we need to introduce a finite sequence of nested subdomains (not necessarily connected) of the parametric domain $[a, b]$,
\[
 {\hat{\Gamma}}^0 \supseteq {\hat{\Gamma}}^1 \supseteq \ldots \supseteq  {\hat{\Gamma}}^M, \qquad
 {\hat{\Gamma}}^M = \emptyset, \qquad  {\hat{\Gamma}}^0=[a,b].
\]
%
\emph{Mesh cells} of  $\hat{\Gamma}^0$ are determined by the partition $\Theta^0:=\Theta$, i.e., a cell is an element $(\theta_i,  \theta_{i+1})$ for $i=1,\dots,L-1$.
 The partition $\Theta^\ell$ of level $\ell$ is defined by dyadically refining the partition of the previous level, $\Theta^{\ell-1}$, for $\ell=1,\dots,M$, setting a simple multiplicity for every added knot. This means that any mesh cell $\hat{Q}$ of level $\ell$ is obtained by halving a cell of level $\ell-1$. Each $\hat{\Gamma}^\ell$ identifies the refinement region at level $\ell$ and it is the union of a certain number of cells defined on $\Theta^{\ell-1}$  \footnote{When using periodic splines on the extended knot vector we need to extend $\hat \Gamma^\ell$ also outside $[a,b]$ so that $a + s\in \hat \Gamma^\ell \iff b + s\in \hat \Gamma^\ell$ for $s\in \big[t_1^\ell-a, -t_1^\ell+a \big]$.}. The \emph{hierarchical mesh} $\hat{\cal Q}$ is defined as the collection of the active cells at different levels, namely 
$\hat{\cal Q}=\left\{\hat{Q}\in \Theta^\ell : \hat{Q}\subset \hat{\Gamma}^\ell \, \land \, \hat{Q}\not\subset\hat\Gamma^{\ell+1},\, \ell = 0,\ldots, M-1\right\}$.

A nested knot sequence of $\mathbf T^\ell$ is uniquely determined by the partition sequence of $\Theta^\ell$. Nested spline spaces are guaranteed by considering sets ${\cal B}^\ell = \big\{B_{1,d}^{(\mathbf T^\ell)}, B_{2,d}^{(\mathbf T^\ell)}, \dots ,B_{N_\ell,d}^{(\mathbf T^\ell)}\big\}$  of B-splines 
for $\ell=0,\dots,M-1$.
A basis for the hierarchical spline space defined on the hierarchical knot configuration is constructed by activating B-spline at finer levels on the refined subdomains. The linear independence of the basis can be guaranteed by eliminating coarser B-splines whose support is completely contained in the refined area. More precisely, 
we define the \emph{hierarchical basis} as
\[
{{\cal H}} := \left\{
B^{\cal H} \in {\cal B}^\ell : \supp(B^{\cal H}) \subseteq \hat{\Gamma}^{\ell} \wedge
\supp(B^{\cal H}) \not\subseteq \hat{\Gamma}^{\ell+1}
\right\},
\]
where each $B^{\cal H}\in \cal B^\ell$ corresponds to some $B_{i,d}^{(\mathbf T^\ell)}$ and $\supp(g)$ denotes the support of a function $g$ \footnote{\label{footsup} For a closed boundary $\Gamma$, a periodic definition of the basis $B_{i,d}^{(\mathbf T^\ell)}$ is adopted. A pair $\big\{ B_{i,d}^{(\mathbf T^\ell)}, B_{N_\ell-d+i,d}^{(\mathbf T^\ell)} \big\}$, for $i=1,\ldots,d$, is merged into one function. 
}.
It is also convenient to introduce a global numbering of the basis elements, 
$B_1^{\cal H}, B_2^{\cal H}, \dots, B_{N_{\cal H}}^{\cal H}$, where $N_{\cal H}$ is the cardinality of ${\cal H}$.



Quadratic B-splines and hierarchical B-splines defined on 3 refinement levels are shown in Figure~\ref{fig:bsplines3}. 
The properties of the hierarchical basis, as well as alternative basis constructions, were recently investigated, see, e.g., \cite{vuong2011,giannelli2012}.
The application of different kind of hierarchical spline refinement in isogeometric analysis is an active topic of research, see, e.g., \cite{vuong2011,giannelli2016,kanduc2017,schillinger2012}.

Note that under the previously mentioned assumptions on the boundary representation, the choice of performing dyadic refinements implies that $\cal H$ consists of functions which are all translates of $M$  dilations of a common reference B-spline defined on uniform knots. This means that every hierarchical B-spline of any level is simply the translate of a dilation of the reference cardinal B-spline. This assumption greatly simplifies and speeds up the implementation of quadrature rules (see Section~\ref{sec:quad}).
Furthermore, in the multivariate setting, the standard generalization of univariate B-splines through the tensor-product model prevents local refinement possibilities. The hierarchical B-spline basis instead can be used in any dimension as an effective adaptive spline construction.

\begin{figure}[t!]
\centering
\subfigure[B-splines of level 0 and ${\hat{\Gamma}}^0$]{
\includegraphics[trim = 1.75cm 0.65cm 1.25cm 0.5cm, clip = true, height=0.9cm]{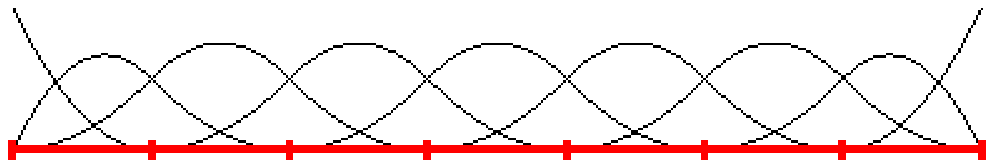}
}
\subfigure[Hierarchical B-splines, 1 level]{
\includegraphics[trim = 1.75cm 0.65cm 1.25cm 0.5cm, clip = true, height=0.9cm]{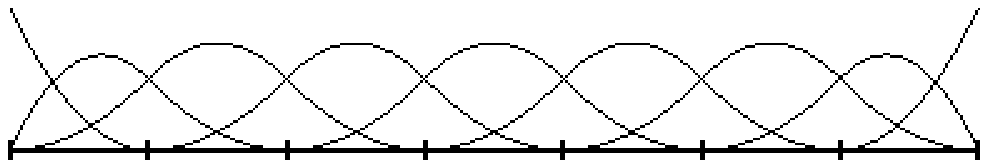}
}
\subfigure[B-splines of level 1 and ${\hat{\Gamma}}^1$]{
\includegraphics[trim = 1.75cm 0.65cm 1.25cm 0.5cm, clip = true, height=0.9cm]{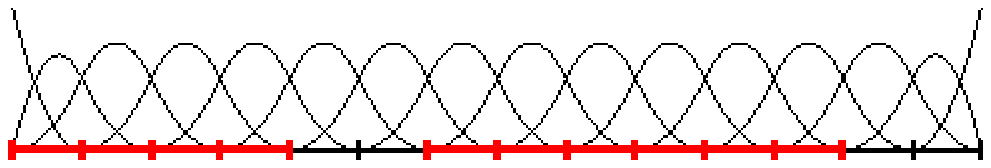}
}
\subfigure[Hierarchical B-splines, 2 levels]{
\includegraphics[trim = 1.75cm 0.65cm 1.25cm 0.5cm, clip = true, height=0.9cm]{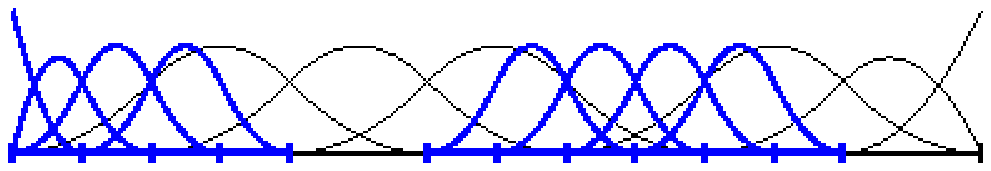}
}
\subfigure[B-splines of level 2 and ${\hat{\Gamma}}^2$]{
\includegraphics[trim = 1.75cm 0.65cm 1.25cm 0.5cm, clip = true, height=0.9cm]{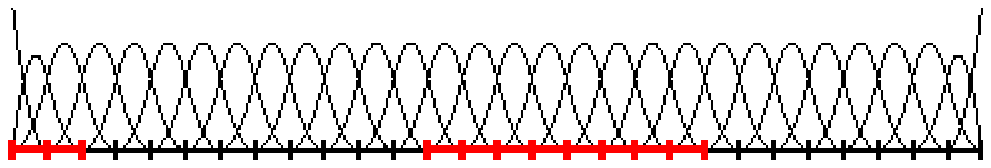}
}
\subfigure[Hierarchical B-splines, 3 levels]{
\includegraphics[trim = 1.75cm 0.65cm 1.25cm 0.5cm, clip = true, height=0.9cm]{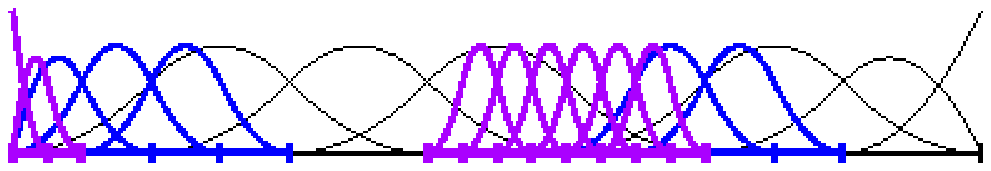}
}
\caption{B-splines (left) and hierarchical B-splines (right)
.}
\label{fig:bsplines3}
\end{figure}

\subsection{Isogeometric discretization}\label{sec:iga}

A discrete version of the given continuous variational problem (\ref{weakpb}) is obtained by approximating the infinite dimensional space $H^{-1/2}(\Gamma)$ with a finite dimensional subspace $S_{\cal H}$.
By adopting the Galerkin formulation the discrete problem reads as:
\be\label{weakpb2}
\text{\emph{given }}\ u_D \in H^{1/2}(\Gamma),\ \text{\emph{ find }}\ \phi_h \in S_{\cal H} \ \text{\emph{ such that }}\
{\cal A}(\phi_h, \psi_h)={\cal F}(\psi_h),\quad \forall \psi_h\in   S_{\cal H}.
\ee
The parameter $h$ in $\phi_h$ is related to the discretization step size of the subspace $S_{\cal H}$. In our setting, the approximation space $S_{\cal H}$ is generated by the lifted splines in ${\cal H}$, 
\begin{align*}
S_{\cal H} := 
\left \langle B_1^{\cal H} \circ \f^{-1}, B_2^{\cal H} \circ \f^{-1}, \dots, B_{N_{\cal H}}^{\cal H}\circ \f^{-1} \right \rangle.
\end{align*}
In the classical BEM setting, $S_{\cal H}$ is generated by functions obtained by lifting the $C^0$ piecewise polynomial Lagrangian basis in the parametric domain to the physical boundary.
Instead, in IgA the lifting is applied to the B-spline basis.


The applied Galerkin method leads to a linear system of  $N_{\cal H}$ equations and  $N_{\cal H}$ unknowns,
\begin{align} \label{sistlin}
 -\frac{1}{2\pi} A \,\bfax = \bfbx.
\end{align}
The unknown entries in the vector $\bfax = (\alpha_1,\dots,\alpha_{N_{\cal H}})^{T}$ are coefficients of the approximate solution of the problem \eqref{weakpb2}, $\phi_h(\x)  := \sum_{j=1}^{N_{\cal H}} \alpha_j (B_j^{\cal H}\circ \f^{-1})(\x)$.
{
The system matrix $A$ is symmetric and positive definite, and its entries $A^{(i,j)}$ are  the following 
double integrals: 
 \be \label{Gmatrix}
A^{(i,j)} := \int_{\Gamma} (B_i^{\cal H}\circ\f^{-1})(\x)
\int_{\Gamma} U(\x,\y)\ (B^{\cal H}_j\circ\f^{-1})(\y)\ d \gamma_\y \ d\gamma_\x.
\ee
The right-hand side vector $\bfbx \in \RR^{N_{\cal H}}$ depends on the given Cauchy data and on the problem at hand. Specifically, in the indirect approach it holds $\bfbx = \bfbx_1$, whereas in the direct approach we have $\bfbx = \frac 1 2 \bfbx_1 - \frac{1}{2 \pi} \bfbx_2$. The entries of $\bfbx_1$ and $\bfbx_2$ are
\be \label{Kt}
\bfbx_1^{(i)} := \int_{\Gamma}u_D(\x) (B^{\cal H}_i\circ\f^{-1})(\x)\, d \gamma_\x, \;
\bfbx_2^{(i)} := \int_{\Gamma} (B^{\cal H}_i\circ\f^{-1})(\x) \int_{\Gamma}
\frac{\partial U}{\partial \n_y}(\x,\y) \, u_D(\y)\, d
\gamma_\y d \gamma_\x. \ee

\subsection{Adaptive scheme}\label{sec:adapt}
The adaptive model iteratively computes an approximate B-spline solution on a hierarchical mesh for the given Laplace problem. The mesh is automatically refined at each step of the adaptive loop by taking into account the error estimator and the marking procedure  described below.

We consider a simple residual-based error estimator. Given the Symm's equation \eqref{Symmeq}, the residual $R_h$ is defined as
\begin{align*}
R_h(\x) := f(\x) - V\phi_h(\x), \quad \forall \phi_h\in H^{-1/2}(\Gamma).
\end{align*} 
Since $V$ is an elliptic isomorphism between $H^{-1/2}(\Gamma)$ and $H^{1/2}(\Gamma)$, and the right hand side $f$ belongs to $H^{1/2}(\Gamma)$, then also $R_h\in H^{1/2}(\Gamma)$. 
The norm in  $H^{1/2}$ can be defined using the following Sobolev-Slobodeckij norm:
\begin{equation}\label{DefSlobodeckij}
\|u\|^2_{H^{1/2}(\Gamma)} := \|u\|^2_{L^2(\Gamma)} + |u|^2_{H^{1/2}(\Gamma)}.
\end{equation}
The symbol $|\cdot|$ denotes the seminorm in $H^{1/2}$, given by
\begin{align*}
|u|_{H^{1/2}(\Gamma)}^2 := \int_{\Gamma}\int_{\Gamma}\frac{\|u(\x)-u(\y)\|^2}{\|\x-\y\|^2}\,d\gamma_\y\,d\gamma_\x.
\end{align*}
Let us assume that a partition of the boundary $\Gamma$ into connected components $\Gamma_i$ is given, such that ${\Gamma = \bigcup_{i} \Gamma_i}$ and for $i\neq j$ the intersection $\Gamma_i \cap \Gamma_j$ is either empty or a common point. Then it is easy to see from definition \eqref{DefSlobodeckij} that
\begin{equation}\label{nonlocal}
\|u\|^2_{H^{1/2}(\Gamma)} = \sum_{i}\|u\|^2_{H^{1/2}(\Gamma_i)} + \sum_{\substack{i,j \\
 i\neq j
} }\int_{\Gamma_i}\int_{\Gamma_j}\frac{\|u(\x)-u(\y)\|^2}{\|\x-\y\|^2}\,d\gamma_\y\,d\gamma_\x. 
\end{equation}
The double-integral definition of the semi norm in \eqref{nonlocal} prevents us to split the norm on $\Gamma$ into a sum of locally defined contributions, i.e., $\|u\|_\Gamma^2 \neq \sum_i \|u\|_{\Gamma_i}^2$. See for instance \cite{carstensen2001mathematical} for a demonstration of non-locality of fractional Sobolev spaces even for a more general condition: $\|u\|_\Gamma^2 \not\leq C\, \sum_i \|u\|_{\Gamma_i}^2$ for any $C>0$.

Following the construction from \cite{feischl2015reliable} we introduce the notion of overlapping \emph{patch domains} in order to define a local approach for the considered error estimator. 

Given a hierarchical mesh in the physical domain, ${\cal Q}= \{Q = \mathbf{F}(\hat{Q}) : \hat{Q}\in \hat{\cal Q}\}$, for every mesh cell $Q$ the patch $\omega(Q)$ collects its neighbouring elements,
\begin{align*}
\omega(Q):= \bigcup\{\overline{Q^\prime}\in\mathcal{Q}: \overline{Q^\prime}\cap \overline{Q}\neq\emptyset \}.
\qquad 
\end{align*}
Then, the residual-based error estimators $\eta_h(Q)$ and $\eta_h$ are constructed on every patch domain $\omega(Q)$ and for the whole mesh $\mathcal{Q}$, respectively,
\begin{align*}
\eta_h^2(Q) := |R_h|^2_{H^{1/2}(\omega(Q))},\qquad \eta_h^2 :=\sum_{Q\in {\cal Q}} \eta_h^2(Q).
\end{align*}
The a posteriori error analysis of the indicator $\eta_h$ was developed in \cite{feischl2015reliable}, where non-uniform (rational) B-splines were considered.
To read about other types of error estimators for BEM, see for instance \cite{carstensen2001mathematical} and references therein.


At any step $k$ of the adaptive loop, we solve the model problem with hierarchical spline spaces defined on the current hierarchical mesh. By computing the residual error estimator, we apply the \textit{D\"{o}rfler marking} \cite{dorfler1996convergent}, that determines the set ${\cal M \subset \cal Q}$ of cells marked for refinement so that, 
\begin{align*}
\theta\,\eta_h \leq \sum_{Q\in\mathcal{M}}\eta_h(Q),
\end{align*}
with respect to the marking parameter $\theta\in(0,1]$. The refined hierarchical mesh to be considered at the step $k+1$ of the adaptive loop is obtained by dyadically refining any marked element.

%
%
%


\section{Quadratures}\label{sec:quad}

In this section we describe our novel quadrature schemes for evaluating the integrals that appear in \eqref{sistlin} as the entries of the coefficient matrix $A$ and of the vector $\bfbx$ on the right--hand side of the linear system. 
The main idea of the schemes is to rewrite the integrand functions in terms of simpler functions that can be efficiently integrated.  Since the integration domain is always rectangular, any double integral can be easily split into two single ones. The quadrature rules do not limit the position of the nodes (a node can also coincide with a singular point of the kernel); furthermore the quadrature nodes for any single integral are always chosen uniformly spaced,  in order to speed up the construction of the quadrature weights. Some theoretical and experimental results related to the accuracy of the schemes are introduced at the end of the section.

Firstly, the integrals are rewritten in the parametric space. Secondly, the kernel splitting of $U$ introduced  in Section  \ref{Kernelsplit} allows us to separate the geometrical influence on the integrand function from the part related to the singularity of the kernel. The details on the splitting procedure for the case of open boundary curves were already described in \cite{ACDS3}. The splitting technique related to the closed curve geometries is new in this paper. Then, preliminaries of the utilized spline quasi--interpolation operator are summarized in Section \ref{prel}. The considered quasi--interpolating approach is a variant of the Hermite scheme introduced in \cite{MSbit09} to approximate a sufficiently regular function $g$, and successively applied in \cite{MS12} to generate an efficient quadrature rule for $\int_a^b g$.

In Sections \ref{Qregsplit} and \ref{Qsingsplit} the scheme is applied as a subroutine to evaluate the following two kinds of regular and weakly singular integrals in 1D:
\begin{equation} \label{intreg}
I_{B_i^{\cal H}} [g] := \int_{D_i} B_{i}^{\cal H}(s)\,  g(s) \ ds,
\end{equation}
 and
\begin{equation} \label{singint}
I_{ {\it w}_i^s} [g] :=\int_{D_i}\log\delta(s,t), B_i^{\cal H}(t)\,  g(t)\ dt. 
\end{equation}
Here we denote $D_i := \supp(B_i^{\cal H})$ and ${\it w}_i^s(\cdot) := \log\delta(s,\cdot)\, B_i^{\cal H}(\cdot)$, and we assume $g \in C(\overline{D}_i)$.
Integrals in \eqref{singint} are considered weakly singular if $s \in {D}_i$, nearly singular if $s \notin {D}_i$ but the distance between $s$ and $D_i$ is sufficiently small, and regular otherwise.

Quadrature techniques for the two integrals share two common basic ideas. First, $g$ is approximated on $D_i$ by a quasi--interpolant spline $\sigma_g$ of a degree $p$. The quasi--interpolant depends only on the values of $g$ at the spline breakpoints. Second, by applying the spline product algorithm in \cite{Morken91}, the product $B_i^{\cal H} \sigma_g$ is expressed as a linear combination of particular B-spline basis functions of degree $p+d$. Since the definite integral of any B-spline is well known, this strategy immediately generates an associated quadrature rule for \eqref{intreg}.  The details are reported in Section \ref{Qregsplit}. Singular integrals in \eqref{singint} are computed by utilizing a quadrature scheme that was developed and tested in \cite{CFSS18}. As shown in Section \ref{Qsingsplit}, it requires the preliminary computations of the \emph{modified moments} for B-splines, which were firstly introduced in \cite{ACDS3} for open boundary curves. Modified moments suitable for a closed geometry can be straightforwardly obtained from the original formulation, as demonstrated at the end of the subsection.
Important steps of the derived quadrature scheme are summarized in a compact algebraic form to evaluate the double integrals in the system matrix $A$.
Finally, in Section \ref{AP} the approximation power of the scheme is analyzed both theoretically and empirically.

\subsection{Splitting the kernel} \label{Kernelsplit}

By introducing coordinates $s,\,t \in [a, b]\subset \RR$ in the parametric domain,
\begin{align*} 
s := \f^{-1}(\x), \quad t := \f^{-1}(\y),
\end{align*}
and taking into account the local support of B-splines, the double integrals in \eqref{Gmatrix}  can be expressed in the parametric intervals as
 \be \label{final_1} I^{(i,j)}:=\int_{D_i} B_{i}^{\cal H}(s)\, J(s)
\int_{D_j}  U(\f(s),\f(t))\, \ B_{j}^{\cal H}(t)\, J(t)  \ dt \ ds,
\ee
where, $J$ is the {\it parametric speed} associated with $\Gamma,$
\begin{align*}
J(\cdot) := \|\f'(\cdot)\|_2.
\end{align*}

In order to separate the contribution of the geometry to the kernel $U$ from the part affected by the singular nature of the kernel, we can write
 \[
U(\f(s),\f(t)) = K_1(s,t) + K_2(s,t)\,,
\]
by defining
\begin{align*}
K_1(s,t) := \frac 1 2 \log \frac{\|\f(s) - \f(t)\|^2_2}{\delta^2(s,t)},\qquad
K_2(s,t) :=  \log \delta(s,t).
\end{align*}
The function $\delta$ needs to be carefully chosen so that the resulting splitting into $K_1$ and $K_2$ simplifies the corresponding integrals. The kernel $K_1$, which carries the geometry information, should be of the highest possible regularity ; specifically the definition of the kernel and its partial derivatives needs to be extendible to the region where $\f(s)=\f(t)$.
Moreover, integrals containing the kernel $K_2$ should have a simple enough computable expressions.
This motivates the definition of $\delta$ according to the type of the boundary domain $\Gamma$,
\begin{align*}
\delta(s,t) := \left\{
\begin{array}{ll}
|s - t|, & \quad \textrm{if $\Gamma$ is an open curve}, \\
|s - t|\, {|(s-t)^2 - \gamma^2|}\,{\gamma^{-2}}, & \quad \textrm{if $\Gamma$ is a closed curve},
\end{array}
\right.
\end{align*}
where $\gamma := b-a$.

The limit case for $\delta(s,t) = |s-t|$ was already proven in \cite{ACDS3},
$$\lim_{ t \to s}  K_1(s,t) = \log J(s)\,.$$
Hence $K_1$ is well defined on $[a,b]^2$, when the boundary is an open curve.
On the other hand, it can be checked analogously that when closed boundary curves are considered, the new definition of $K_1$ guarantees that the above limit holds as well. Furthermore, since $\f(a)=\f(b)$, the other two factors in $\delta$ are added so that the kernel $K_1$ is well defined on $[a,b]^2$ also when $(s-t) \rightarrow \pm (b-a)$.

The integral defined in \eqref{final_1} can then be evaluated as
$$I^{(i,j)}=I_{K_1}^{(i,j)} + I_{K_2}^{(i,j)}$$ with
\begin{align} \label{IK12}
I_{K_r}^{(i,j)} : = \int_{D_i} B_{i}^{\cal H}(s)\, J(s) \int_{D_j}
K_{r}(s,t)\, B_{j}^{\cal H}(t)\, J(t)\,dt \ ds, \qquad r=1,2.
\end{align}

\subsection{Quasi--interpolation quadrature scheme} \label{prel}

For integrals \eqref{intreg} and \eqref{singint} the quadrature formula utilizes quasi--interpolation spline techniques as subroutines. Without loss of generality let us assume in this subsection that the integration domain is $[0, 1]\subset\RR$. 
When $g$ is a regular function, the integral $\int_0^1 g$ is approximated by $\int_0^1 \sigma_g$. The quasi--interpolating spline $\sigma_g$ is described by its breakpoints on interval $[0,1]$ and by its locally defined spline control coefficients. In particular, we rely on the quasi--interpolation scheme introduced in \cite{MSbit09}, which uses splines of maximal smoothness and  defines the spline coefficients using only discrete information at the spline breakpoints.
Thus, the approximant $\sigma_g$ is defined in the spline space $S_{\btau}$ of degree $p$ and on the open knot vector $ \btau := \{\tau_{-p},\ldots,\tau_{n+p} \}$, with $ 0= \tau_{-p}=\ldots=\tau_0 < \cdots < \tau_n = \ldots=\tau_{n+p} = 1$:
\begin{align*}
S_{\btau} :=  \left \langle B_{-p,p} ^{(\btau)}, \dots,B_{n-1,p} ^{(\btau)} \right \rangle. %
\end{align*}
The breakpoints of $\btau$ define $n+1$ quadrature nodes. 
The B-form of the quasi--interpolant $\sigma_g$ reads
\begin{align*} 
\sigma_g :=\sum_{j=-p}^{n-1}\lam_j(g)\, B_{j,p}^{(\btau)},
\end{align*}
where  each  $\lam_j (g)$ is defined as a  suitable  linear combination of  a local subset of $g$ and possibly $g'$ values at the spline breakpoints.
For instance, for $p=2$, they can be obtained as
$$
\begin{array}{rl}
\lam_j(g) &=\frac{1}{2}\left( g(\tau_{j+1})+g(\tau_{j+2})\right)-  \frac{\tau_{j+1}-\tau_j}{4}\left( -g'(\tau_{j+1})+g'(\tau_{j+2})\right), \quad  j=-1,\ldots,n-2,\cr
\lam_{-2}(g) &=\,g(\tau_0), \qquad \lam_{n-1}(g)=g(\tau_n).
\end{array}
$$
This quasi--interpolation scheme has the optimal approximation order $p+1$ for $g\in C^{p+1}[0,1]$. 

To avoid problems, when derivatives of $g$ are not available or their computation is too expensive, we focus on a modified version of the scheme, where the derivative values of $g$ are automatically approximated by suitable finite difference formulas \cite{MS12}.
The later scheme ${\cal I}^Q[g]$ to approximate $\int_0^1 g$ reads
\begin{equation} \label{Q1}
{\cal I}^Q[g] \,:=\, \bphi^T\ \g,
\end{equation}
where $\g:= \left( g(\tau_0),\ldots, g(\tau_n) \right)^T$, and the weight vector $\bphi$ has the following structure,
\[
\bphi:= \hat{C}^{(p)} \ \KK^{(p)},
\]
with $\KK^{(p)}  \,:=\, (k_j) \in \RR^{n+p+1}$ and $k_j :=  \int_{\supp(B_{j,p}^{(\btau)})}  B_{j,p}^{(\btau)}(s) \ ds$.
We recall the definite integral of an arbitrary B-spline $B_{i,d}$ is
\begin{align}\label{eqn:integBspline}
 \int_{\supp(B_{i,d})}  B_{i,d}(s) \ ds = \frac{L_{\rm s}}{d+1},
\end{align}
where $L_{\rm s}$ is the size of $\supp(B_{i,d})$ and $d$ is the spline degree.
The other factor appearing in the definition of $\bphi$ is the matrix $\hat{C}^{(p)} \in \RR^{(n+1) \times (n+p+1)}$ which is banded with bandwidth depending on $p$. 

Referring to \cite{MS12} for the details, we highlight just two important properties of the rule in (\ref{Q1}). First, when $\btau$ is uniform, the matrix $\hat{C}^{(p)}$ greatly simplifies and hence so does $\bphi$. Second, we recall the scheme's convergence behavior. By denoting with $|\btau|$ the maximal distance between two consecutive 
breakpoints in $\btau$,  the quadrature error is bounded by $C\, \| D^{p+1} g \|_{L^{\infty}} |\btau|^{p+1}$ if $g \in C^{p+1}[0,1]$. Furthermore, for $p$ even and symmetric mesh on the integration interval, the order increases to  $O(|\btau|^{p+2})$.

\begin{rmk}\label{rmk:QI}
 The described quadrature \eqref{Q1} can be applied on integrals \eqref{intreg} by approximating the whole product $B_i^{\cal H} g$ with the quasi--interpolation spline. The drawback of this approach is that the accuracy of the integration scheme depends also on the regularity of $B_i^{\cal H}$. Hence, if $B_i^{\cal H}$ is locally less regular than $g$, the accuracy of the rule is reduced. Furthermore, the norm $\| D^{p+1} (B_i^{\cal H} g) \|_{L^{\infty}(\overline{D}_i)}$ in the estimation grows with smaller $D_i$. To overcome these limitations, a quadrature rule with a separate B-spline factor is introduced in the following subsection.
\end{rmk}


\subsection{Quadrature for regular integrals with a B-spline factor} \label{Qregsplit}

In this subsection we describe a specific quadrature rule to handle integrals of the type introduced in \eqref{intreg}. 
When switching to the parametric domain, such integrals 
appear in the system matrix in $I_{K_1}$ and in $I_{K_2}$ for regular integrands (see \eqref{IK12}), and also in the right-hand side vectors, that is in $\bfbx_1$ and in outer integrals of $\bfbx_2$ in \eqref{Kt}. 

Following the construction from the previous subsection, the function
$g$ in \eqref{intreg} is approximated by the QI spline approximant $\sigma_g$ on $D_i$. The spline lies in $S_{\btau^{(i)}}$, where $\btau^{(i)}$ is the open knot vector associated to a uniform partition of $D_i$ into $n$ subintervals. The product $ B_{i}^{\cal H}\,  \sigma_g$ is a spline of degree $p+d$ 
defined on $D_i$. Referring to \cite{Morken91} for the details, the product can be expressed in B-form in a new basis, 
$\{B^{(\btau_{\Pi})}_{k,p+d}\}_{k=1}^{P}$, which spans the product space $\Pi$, defined on the knot vector  $\btau_\Pi$. The dimension $P$ of the product space $\Pi$ depends on $n$, $d$ and $p$. The definite integral of any $B^{(\btau_\Pi)}_{k,p+d}$ can be easily computed from formula  \eqref{eqn:integBspline}. Thus, 
the quadrature rule ${\cal I}^Q_{B_{i}^{\cal H}}[g]$ for $I_{B_i^{\cal H}} [g] $ for the auxiliary function $g$ can be expressed as
\be \label{Q2}
{\cal I}^Q_{B_{i}^{\cal H}}[g] \,:=\, {\w^{(i)}}^T\ \g^{(i)},
\ee
with $ \g^{(i)} := \left( g(\tau_0^{(i)}),\ldots, g(\tau_n^{(i)}) \right)^T$ and the weight vector $\w^{(i)}$ defined as
\[
\w^{(i)} :=  \hat{C}^{(p)} \ G^{(p,d)}  \ \KK^{(d+p)}.
\]
The matrix $G^{(p,d)} \in \RR^{(n+p+1) \times P}$ is 
easily defined from the formulas in \cite{Morken91} to obtain a compact representation of all the coefficients of the product $ B_{i}^{\cal H}\,  \sigma_g,$ starting form the B-spline representation of $\sigma_g.$ 
Note that the weight vector $\w^{(i)}$ depends on the index $i$  only because the entries of $\KK^{(d+p)}$ depend on a scale factor given by the size of $D_i$.

The advantage of \eqref{Q2} is apparent when the auxiliary function $g$ is locally smoother than the B-spline factor and $d\leq p+1$, since in such case the rule in (\ref{Q1}) would not reach its maximal approximation power (see Remark~\ref{rmk:QI}). Moreover, the error of the quadrature \eqref{Q2} does not depend on the norm of the derivatives of $B_i^{\cal H}$, which can be arbitrarily large with the smaller size of $D_i$. The convergence properties of the scheme \eqref{Q2} is analyzed in Section~\ref{AP}.

As mentioned before, the introduced rule  \eqref{Q2} is employed in the assembly phase of our model for three kinds of non-singular integrals. 
In particular, the double integrals $I_{K_1}^{(i,j)}$ are approximated by the following scheme
\be \label{eqn:IK1}
I_{K_1}^{(i,j)} \approx {\w^{(i)}}^T\  J^{(i)} \
\left( \begin{array}{lll}
K_1(\tau^{(i)}_0,\tau^{(j)}_0) & \cdots &K_1(\tau^{(i)}_0,\tau^{(j)}_{n})\cr
\vdots & \vdots & \vdots \cr
K_1(\tau^{(i)}_n,\tau^{(j)}_0) & \cdots &K_1(\tau^{(i)}_n,\tau^{(j)}_n)
\end{array}
\right) J^{(j)} \w^{(j)},	 
\ee
where $J^{(k)}:=\mbox{diag}\left(J(\tau^{(k)}_0),\ldots,J(\tau^{(k)}_n) \right)$.
The same approach is applied also for the numerical computation of $I_{K_2}^{(i,j)}$, whenever it is regular.
The entries of the vector $\bfbx_1$ are obtained by setting $g(\cdot) = J(\cdot) (u_D \circ \f^{-1})(\cdot)$. A similar structure to \eqref{eqn:IK1} is applied to approximate also the entries of $\bfbx_2$, which appear in the formulation for interior problems.
 
\begin{rmk}
Entries in $\bfbx_2$ are regular integrals due to assumption $\f \in C^2[a,b]$; see \cite{ACDS3}, Section 3.2.
\end{rmk}

\subsection{Quadrature for singular integrals with a B-spline factor} \label{Qsingsplit}

To address weakly singular and nearly singular integrals of the type \eqref{singint} an extension of the rule \eqref{Q2} has been recently developed \cite{CFSS18} (namely {\tt procedure~2} in Section 5).
Integrals \eqref{singint} appear in the discretized Galerkin boundary equations in $I_{K_2}$, see \eqref{IK12}.

The considered scheme for (nearly) singular integrals incorporates a similar approximation technique to the one described in Section~\ref{Qregsplit}.  First, function $g$ is approximated by QI spline $\sigma_g$ on $D_i$. Then $B_i^{\cal H}\, \sigma_g$ is represented in B-form using B-spline basis $\{B^{(\btau_\Pi)}_{r,p+d}\}_r$ of degree $p+d$, defined on the local product space $\Pi$ of dimension $P$. Instead of definite integrals $\int_{D_i} B^{(\btau_\Pi)}_{r,p+d}$ in \ref{Qregsplit}, we need to compute the {\it modified moments}
\begin{align} \label{modmom}
\mu_r^{(i)}(s) := \int_{D_i} K_2(s,t)\, B^{(\btau_\Pi)}_{r,p+d}(t) \ dt\,, \qquad r=1,\ldots,P.
\end{align}

A recurrence formula to obtain exact expressions for the modified moments is derived from the B-spline recursive definition \cite{ACDS3}. The recurrence formula relies on given initial values, specifically on
$$
I_{K_2} (t^k\,\chi_{[c_1,c_2]},s) \,:=\, \int_{c_1}^{c_2}K_2(s,t) \,t^k\, dt \,, \qquad s \in [c_1, c_2],
$$
where $\chi_{[c_1,c_2]}$ is the characteristic function of the interval $[c_1, c_2].$
When dealing with an open boundary curve $\Gamma,$ it holds
\begin{align}\label{eqn:momentsValues}
\nonumber I_{K_2} (t^k\,\chi_{[c_1, c_2]},s) \, &=\, \sum_{j=0}^k \binom{k}{j} s^{k-j} \int_{c_1-s}^{c_2-s} \log|z| \cdot z^j\, dz\,\\
& =\, {\sum_{j=0}^k \binom{k}{j} s^{k-j}} \left. \frac{z^{k+1}}{k+1} \left( \log|z| - \frac{1}{k+1} \right) \right|_{c_1-s}^{ c_2-s}. 
\end{align}
We derive a similar  formula for the initial expressions when $\Gamma$ is closed. Recalling that in such case we have set $K_2(s,t) = \log(\delta(s,t))$ with
\begin{align}\label{eqn:newDistSplit}
\delta(s,t)={|s-t|\, |(s-t)^2-\gamma^2|}\,{\gamma^{-2}} = {|s-t|} \cdot {|(s-t)+\gamma|}\ {\gamma^{-1}} \cdot {|(s-t)-\gamma|}\,{\gamma^{-1}}
\end{align}
and hence $\log(\delta(s,t))$ is split into a sum of three functions. Modified moments for the function $|s-t|$ correspond to the ones already obtained for the case of the open boundary curve. The initial values of the recurrence formula for the latter two functions can be written in a compact form after some  simplifications of the derived expressions,
\begin{align}\label{eqn:momentsValues2}
\hspace{-.75cm}  \int_{c_1}^{c_2}\log\frac{|(s-t) \pm \gamma|}{\gamma} \cdot t^k\, dt \  \,=\, \sum_{j=0}^k \binom{k}{j} (\mp \gamma)^{j+1} (s \pm \gamma)^{k-j} \int_{\pm (s-c_1\pm \gamma)\, \gamma^{-1}}^{\pm (s-c_2 \pm \gamma)\,\gamma^{-1}} \log|z| \cdot z^j\, dz.
\end{align}
Analytical expressions for the emerged integrals in \eqref{eqn:momentsValues2} are obtained analogously as in \eqref{eqn:momentsValues}.
The  derived quadrature rule ${\cal I}^Q_{ {\it w}_i^s} [g]$ for $I_{ {\it w}_i^s} [g]$ can be compactly written as follows
\be \label{Q3}
{\cal I}^Q_{ {\it w}_i^s} [g] \,:=\,   {\bfeta^{(i)}}(s)  ^T\ \g^{(i)}\,,
\ee
where  $ \g^{(i)} := \left( g(\tau_0^{(i)}),\ldots, g(\tau_n^{(i)}) \right)^T$ and the weight vector $\bfeta^{(i)}(s)$ is defined as
\[
\bfeta^{(i)}(s) :=  \hat{C}^{(p)} \ G^{(p,d)}  \ \bmu^{(i)}(s) \ ,
\]
with $\bmu^{(i)}(s) := (\mu_1^{(i)}(s),\ldots,\mu_P^{(i)}(s))^T$.

Quadrature rule \eqref{Q3} is applied to approximate inner integrals in $I_{K_2}^{(i,j)}$, when they are singular or nearly singular.
For the outer integrals of $I_{K_2}^{(i,j)}$ we can instead use again (\ref{Q2}). Thus the quadrature for the double integral can be written using the following compact algebraic representation,
 \[
I_{K_2}^{(i,j)} \approx  {\w^{(i)}}^T\,  J^{(i)}\, M^{(i,j)}\, {G^{(p,d)}}^T\, {{}{\hat C}^{(p)}}^T\, J^{(j)}\, {\bf e},
\]
with ${\bf e} :=(1,\ldots,1)^T \in \RR^{n+1}$ and with $M^{(i,j)}$ denoting a matrix of size $(n+1) \times P$ with entries
\[
\left(M^{(i,j)} \right)_{k,r} := \mu_r^{(j)}(\tau_k^{(i)})\,.
\]
Clearly the introduced formula to approximate  $I_{K_2}^{(i,j)}$ is more involved than the one adopted for non-singular double integrals because it requires the preliminary computation of the modified moments in the matrix $M^{(i,j)}.$  Concerning its cost, it is worth to be mentioned that, if there exists a translation factor $\xi$ such that for another pair of indices $i'$ and $j'$ it is $B_{i'}^{\cal H}(\cdot) =  B_{i}^{\cal H}(\cdot -\xi)$ and $B_{j'}^{\cal H}(\cdot) =  B_{j}^{\cal H}(\cdot -\xi)$, then $M^{(i',j')} = M^{(i,j)}$ since the kernel $K_2$ depends only on the difference of  its arguments. Note that this is not uncommon in our uniform hierarchical setting.  A further reduction of the computational cost  can be obtained considering that, for the same reason, some of the entries of the matrix $M^{(i,j)}$ coincide \footnote{Even though the multiple knots are necessary to define the local product space \cite{Morken91}, it is also spanned by translates of few different B-splines because of the uniformity assumption on the initial space $S$ and of the choice of using dyadic hierarchical refinement.}.

\begin{rmk}
In \cite{ACDS3} all the inner integrals of $I_{K_2}^{(i,j)}$ were approximated by the same singular based quadrature rule, even if the kernel $K_2(s,t)$ did not locally exhibit any singularity. Relating to modified moments for Legendre polynomials, it was already observed in \cite{AD2002} that  their computation in finite arithmetic  can become increasingly unstable as the distance between $s$ and $D_i$ increases.  We observed a similar instability issue also in our experiments; for a fixed $s$ the instability increases also with higher spline degrees and smaller sizes of $D_i$. This motivates the use of regular based quadratures for $I_{K_2}^{(i,j)}$ when the inner integrals are regular. Also, a high-precision floating point arithmetic to evaluate the modified moments is advised.
\end{rmk}


\subsection{Accuracy of the quadrature rules for the boundary integrals} \label{AP}
\label{sec_quadaccuracy}

The matrix $A$ and the right-hand side vector $\bfbx$ are never computed exactly, due to quadrature errors. The theory of the perturbed Galerkin method guarantees that the optimal order of convergence of the perturbed Galerkin solution can be obtained if the size of the perturbation of $A$ and $\bfbx$ is sufficiently small \cite{BEMbook}. The amount of the perturbation can clearly be controlled by choosing a sufficiently accurate quadrature rule. 
Typically, the convergence properties of quadratures are studied with respect to the number of nodes. On the other hand, in order to design an efficient BEM scheme, a low amount of nodes is preferable. For that reason we are more interested to study the convergence of integrals with respect to the mesh size $h$, coming from the h--refinement, while maintaining the number of nodes fixed. 
Thus in this subsection  the accuracy of the formulas in (\ref{Q2}) and (\ref{Q3}) is studied with respect to $h_{\ell_i}$, which denotes the uniform size of the cells of level $\ell_i,$  where $1 \le \ell_i \le M,$ for each $i=1,\ldots,N_{\cal H}.$ More specifically, under suitable regularity assumption on the auxiliary factor $g$ we derive an upper bound for the  following two quadrature errors $e$ and $E$ in terms of the mesh size $h_{\ell_{i}}$, 
\begin{equation} \label{errquad}
e(h_{\ell_i}) \,:=\, I_{B_i^{\cal H}} [g] - {\cal I}^Q_{B_{i}^{\cal H}}[g]\,, \qquad  
E(h_{\ell_i},s) \,:=\, I_{w_i^s} [g] - {\cal I}^Q_{w_i^s}[g]\,.
\end{equation}
\begin{prn} \label{prn1}
Let  $g \in C^{p+1}[a, b]$.  Let the number $n+1$ of uniform nodes for the quadratures $ {\cal I}^Q_{B_{i}^{\cal H}}[g]$ and ${\cal I}^Q_{w_i^s}[g]$ be fixed. Then there exist two positive constants $C_1$ and $C_2$ not depending on the index $i$ such that
$$
\begin{array}{ll}
|e(h_{\ell_i})| & \leq   C_1  h_{\ell_i}^{{p+2}} \|D^{p+1} g \|_{L^{\infty}(\overline{D}_i)}, \cr
|E(h_{\ell_i},s)| & \leq C_2 h_{\ell_i}^{{p+2}}\,|\log h_{\ell_i}|\, \|D^{p+1} g \|_{L^{\infty}( \overline{D}_i)},	 
\end{array}
$$
where $h_{\ell_i}$, mesh spacing corresponding to $B_i^{\cal H}$, is sufficiently small.  
\end{prn}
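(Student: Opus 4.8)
The plan is to reduce both quadrature errors to the single-node-count error estimate already recalled for the basic QI rule in Section~\ref{prel}, applied after a change of variables that maps the support $D_i$ of $B_i^{\cal H}$ onto the reference interval $[0,1]$. First I would observe that, since we are in the uniform dyadic hierarchical setting, the B-spline $B_i^{\cal H}$ is a dilation/translation of a single reference cardinal B-spline of degree $d$ on $[0,1]$; writing $D_i = [\alpha_i, \alpha_i + (d+1)h_{\ell_i}]$ and substituting $t = \alpha_i + (d+1)h_{\ell_i}\, \hat t$, the integral $I_{B_i^{\cal H}}[g]$ becomes $(d+1)h_{\ell_i}$ times an integral on $[0,1]$ of $\hat B(\hat t)\,\hat g(\hat t)$, where $\hat B$ is the fixed reference B-spline and $\hat g(\hat t) := g(\alpha_i + (d+1)h_{\ell_i}\hat t)$. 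The quadrature $\mathcal{I}^Q_{B_i^{\cal H}}[g]$ transforms accordingly into $(d+1)h_{\ell_i}$ times the rule applied to $\hat g$ on $[0,1]$ with the \emph{same} fixed number $n+1$ of uniform nodes (the index dependence of $\w^{(i)}$ being exactly the scale factor in $\KK^{(d+p)}$, as remarked after \eqref{Q2}).

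Next I would invoke the construction of \eqref{Q2}: the transformed rule is exact on $S_{\btau_\Pi}$, the product space of degree $p+d$ spanned by $\hat B\,\sigma_{\hat g}$ for any $\hat g$, because the product $\hat B\,\sigma_{\hat g}$ is integrated exactly via \eqref{eqn:integBspline}. Hence the only error committed is $\int_0^1 \hat B(\hat g - \sigma_{\hat g})$, and using $0 \le \hat B \le 1$ together with the optimal QI approximation estimate $\|\hat g - \sigma_{\hat g}\|_{L^\infty[0,1]} \le C\,\|D^{p+1}\hat g\|_{L^\infty[0,1]}$ (here for a \emph{fixed} uniform knot vector $\btau$ on $[0,1]$, so the constant is genuinely $i$-independent; and since $p$ is even with a symmetric mesh one actually gains an extra order, which is what produces $p+2$ rather than $p+1$), one gets a bound $\le C\,\|D^{p+1}\hat g\|_{L^\infty[0,1]}$ times the extra node-spacing factor. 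Then I would undo the scaling: $\|D^{p+1}\hat g\|_{L^\infty[0,1]} = \big((d+1)h_{\ell_i}\big)^{p+1}\|D^{p+1}g\|_{L^\infty(\overline D_i)}$, and multiplying by the outer Jacobian $(d+1)h_{\ell_i}$ and collecting the gained order from symmetry yields $|e(h_{\ell_i})| \le C_1 h_{\ell_i}^{p+2}\|D^{p+1}g\|_{L^\infty(\overline D_i)}$ with $C_1$ absorbing all the $(d+1)$ powers and the fixed reference constant.

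For the singular estimate $E(h_{\ell_i},s)$ the structure is identical, the only change being that the exactness of \eqref{Q3} is now with respect to the \emph{modified moments} $\mu_r^{(i)}(s)$ in place of plain B-spline integrals: the rule integrates $\log\delta(s,t)\,\hat B(\hat t)\,\sigma_{\hat g}(\hat t)$ exactly, so the error is again $\int_{D_i}\log\delta(s,t)\,B_i^{\cal H}(t)\big(g - \sigma_g\big)(t)\,dt$. Bounding this I would pull out $\|g - \sigma_g\|_{L^\infty(\overline D_i)}$ and $0\le B_i^{\cal H}\le 1$, leaving $\int_{D_i}|\log\delta(s,t)|\,dt$; after the scaling to $[0,1]$ this integral is $O(h_{\ell_i}\,|\log h_{\ell_i}|)$ uniformly in $s$, because $\log\delta$ is $L^1$ with a logarithmic-type modulus and the subdominant factors in $\delta$ for the closed-curve case are bounded and bounded away from zero on the relevant range. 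Combining as before gives the extra $|\log h_{\ell_i}|$ factor and the stated bound.

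The main obstacle I anticipate is making the constants genuinely independent of the index $i$: this hinges precisely on the uniform dyadic hierarchy, which guarantees that after rescaling the knot vector $\btau$, the matrices $\hat C^{(p)}$, $G^{(p,d)}$, and the reference B-spline $\hat B$ are all the \emph{same} fixed objects regardless of level, so that the QI approximation constant and the product-space exactness are uniform; without that uniformity the bandwidth-$p$ matrices and the norm $\|D^{p+1}B_i^{\cal H}\|$ would reintroduce $i$-dependence (this is exactly the pitfall flagged in Remark~\ref{rmk:QI}). A secondary technical point is the uniform-in-$s$ control of $\int_{D_i}|\log\delta(s,t)|\,dt$ near the endpoints and, for closed curves, near $|s-t|=\gamma$, which requires a short direct estimate using the explicit form \eqref{eqn:newDistSplit} of $\delta$.
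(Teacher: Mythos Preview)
Your overall strategy---write the error as $\int B_i^{\cal H}(g-\sigma_g)$ (respectively $\int \log\delta(s,\cdot)\,B_i^{\cal H}(g-\sigma_g)$), pull out $\|g-\sigma_g\|_{L^\infty(\overline D_i)}$, apply the QI approximation bound, and estimate the remaining integral---is exactly the paper's approach; your rescaling to $[0,1]$ is merely a repackaging of what the paper does directly on $D_i$.

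However, you misidentify the source of the exponent $p+2$. You attribute the extra order to the even-$p$/symmetric-mesh superconvergence of the QI rule. That is not what is happening, and the Proposition does not assume $p$ even. In your own scaling argument the $p+2$ is already fully accounted for: the chain rule gives $\|D^{p+1}\hat g\|_{L^\infty[0,1]}=((d+1)h_{\ell_i})^{p+1}\|D^{p+1}g\|_{L^\infty(\overline D_i)}$, and the Jacobian contributes one more factor $(d+1)h_{\ell_i}$, producing $h_{\ell_i}^{p+2}$ with no parity hypothesis. In the paper's direct argument the same count reads $\|g-\sigma_g\|_{L^\infty(\overline D_i)}=O(h_{\ell_i}^{p+1})$ (since $|\btau^{(i)}|=(d+1)h_{\ell_i}/n$) together with $\int_{D_i}B_i^{\cal H}\le h_{\ell_i}$, and analogously $\int_{D_i}|\log\delta(s,t)|\,dt\le C\,h_{\ell_i}|\log h_{\ell_i}|$ for the singular case. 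The superconvergence for even $p$ is a separate experimental observation (recorded in a Remark \emph{after} the proof) and would give a further order beyond $p+2$, not the $p+2$ of the Proposition itself. So drop the parity appeal; your argument is then correct and coincides with the paper's.
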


\begin{proof}
The errors $e(h_{\ell_i})$ and $E(h_{\ell_i},s)$ are a result of the approximation step, where $g$ is approximated by the quasi--interpolant spline $\sigma_g$ on $D_i$.
This implies that
$$\begin{array}{ll} 
\left| e(h_{\ell_i}) \right| &=\,\left| \int_{D_i}  B_i^{\cal H}(t) [g(t) - \sigma_g(t)] dt \right| \, \leq \, \|g-\sigma_g\|_{L^\infty(\overline{D}_i)} \int_{D_i}  B_i^{\cal H}(t)  dt,\\[2ex]
\left| E(h_{\ell_i},s)\right| &= \, \left| \int_{D_i}  \log (\delta(s,t))\, B_i^{\cal H}(t) [g(t) - \sigma_g(t)] dt \right| \\ &\leq \,
  \|g-\sigma_g\|_{L^\infty(\overline{D}_i)} \int_{D_i}  |\log(\delta(s,t)) | B_i^{\cal H}(t)  dt \,.
\end{array}
$$
The convergence property of the quasi--interpolation scheme that was proven in \cite{MSbit09} ensures that there exists a constant $L_0$ such that
\begin{align*}
\hspace{-.75cm} \|g-\sigma_g\|_{L^\infty(\overline{D}_i)}  \leq  L_0 | \btau^{(i)} |^{p+1} \, \|D^{p+1} g\|_{L^\infty(\overline{D}_i)} \,\leq\,  L_0\, \left(\frac{ d+1}{n}\right)^{p+1}\,  h_{\ell_i} ^{{p+1}} \, \|D^{p+1} g\|_{L^\infty(\overline{D}_i)}.
 \end{align*}
 The second inequality holds true, since, due to the uniformity of $\btau^{(i)}$ on $D_i$, $n |\btau^{(i)}| \leq (d+1) h_{\ell_i}$.

By applying (\ref{eqn:integBspline}) we immediately get
$$
\int_{D_i}  B_i^{\cal H}(t)  dt \, \,\le h_{\ell_i}
$$ 
and the inequality for $|e(h_{\ell_i})|$ is proven by setting $C_1 := L_0\, ( d+1)^{p+1} n^{-p-1}$.

To prove the inequality for $|E(h_{\ell_i},s)|$, 
the integral in the estimate can be bounded by
\begin{align}\label{eqn:integIneq}
\int_{D_i} | \log \delta(s,t)| B_i^{\cal H}(t)  dt \leq \int_{D_i} | \log \delta(s,t)|\, dt.
\end{align}

First, let us focus on the case $\delta(s,t) = |s-t|$ and let us assume that $D_i \cap [s-e^{-1},s+e^{-1}] = \emptyset \,,$  where $e$ is the Euler's number. Then we can bound the integrand $|\log|s-t||$ in \eqref{eqn:integIneq} by a constant $C'_2:=\max\{1, \log (b-a) \}.$ Hence the integral can be further bounded as follows,
\begin{align*}
\int_{D_i} | \log |s - t|| dt \leq \int_{D_i} C'_2\, dt = C'_2\, (d+1)\, h_{\ell_i}
\end{align*}
and the estimate for $|E(h_{\ell_i},s)|$ holds true for $C_2 := L_0\, ( d+1)^{p+2} n^{-p-1} C'_2$.

Now let us consider the case $D_i \subset [s-e^{-1},s+e^{-1}].$  Setting  $z:=s-t$, since $\int \log |z|\, dz \,=\, z\,(\log |z| -1)$ and the function $z\,|\log z|$ is  monotonically increasing in $(0, e^{-1})$, we get
\begin{align*}
\int_{D_i} |\log|s-t||\, dt &= z\, (\log |z| -1)\big|_{s-\tau_0^{(i)}}^{s-\tau_n^{(i)}}\\
&= (d+1) h_{\ell_i} - (s-\tau_n^{(i)})\, \big|\log | s - \tau_n^{(i)}|\big| +  (s-\tau_0^{(i)})\, \big|\log |s-\tau_0^{(i)}|\big|\\
& \leq (1+2 \log (d+1))\, (d+1)\, h_{\ell_i} + 2 (d+1)\, h_{\ell_i}\, | \log h_{\ell_i}|.
\end{align*}
Therefore we can find a constant $C'_2>0\,,$ depending only on $d,$ such that
\begin{align*}
\int_{D_i} | \log |s -t|| dt   \leq C'_2\,  h_{\ell_i}\, | \log h_{\ell_i} |.
\end{align*}
Hence we set $C_2 := L_0\, ( d+1)^{p+1} n^{-p-1} C'_2$. \\
The remaining case when $D_i$ is only partially inside $[s-e^{-1},s+e^{-1}]$ is similar to the previous ones if the integration domain is splitted into two intervals.

 When  $\delta(s,t)={|s-t|}\,{|(s-t)^2-\gamma^2|}\,{\gamma^{-2}}$, we can split the estimate \eqref{eqn:integIneq} into three parts using the relation \eqref{eqn:newDistSplit},
\begin{align*}
\int_{D_i} | \log \delta(s,t)|\, dt \leq 
\int_{D_i} |\log{|s-t|}| + \left|\log \frac{|(s-t)+\gamma|}{\gamma}\right| + \left|\log \frac{|(s-t)-\gamma|}{\gamma}\right| \, dt.
\end{align*}
Upper bounds for the latter two integrand functions are obtained in a similar way as for the first one by considering shifts and scaling by $\gamma$ of the logarithmic function.

\end{proof}

\begin{rmk}
As well as for the original quadrature rule reported in (\ref{Q1}), we have experimented that, when $p$  is even, the approximation power of both the rules introduced in (\ref{Q2}) and (\ref{Q3}), implemented using uniform nodes, increases of one order.
\end{rmk}

In the remainder of the section we present the results of two numerical experiments aimed  to corroborate Proposition~\ref{prn1} for the estimates $e$ and $E$ in \eqref{errquad}.  Convergence of the error with respect to $h_{\ell_i}$ is studied on a sequence of uniform meshes. The errors are computed by using as exact integral values those  obtained with the integration solver in Wolfram Mathematica.

Let $d=p=2$ and let us consider the function $g(t)=\sqrt{1+4t^2}$. In both the experiments we use two different values for the number of quadrature nodes $n+1$:   $n = 5$ and $n= 25$. Then, in the first experiment, see the picture on the left of Fig. \ref{fig:singIntConvergence}, we compute the quantity $ \max_i  | e(h_{\ell_i} ) | $ measured for successively halved values of $h_{\ell_i}$. The quadratic B-splines $B_{i}^{\cal H}$ are constructed on the integration interval $[-1, 1]$, by using uniform open knot vectors for the following mesh sizes: $h_{\ell_i}=1/5 \cdot 2^{-\ell}$ for $\ell = 0,1,2,3$. 
Note that $g$ is regular in the integration domain and actually the error behaviour shown in the figure exhibits order of convergence $O(h^5_{\ell_i})$.
As a second test, relating to the same $g$ function, we analyze the accuracy of the derived QI quadrature scheme for the integrals \eqref{singint} by computing the values $E(h_{\ell_i},s)$ for all the B-splines and varying  the parameter $s$ in the set of all the spline breakpoints and their midpoints.  
 The convergence of the error $\max_{i,s} |E(h_{\ell_i},s)|$ shown on the right of Figure~\ref{fig:singIntConvergence} reveals the convergence order $O(h_{\ell_i}^5 |\log h_{\ell_i}|)$.}  
Note that, as expected,  in both the experiments the accuracy of the quadrature is  improved when the number $n+1$ of quadrature nodes is increased.
\begin{figure}[t!]
\centering 
\includegraphics[trim = 0cm 0cm 0.5cm 0.25cm, clip = true, height=5cm]{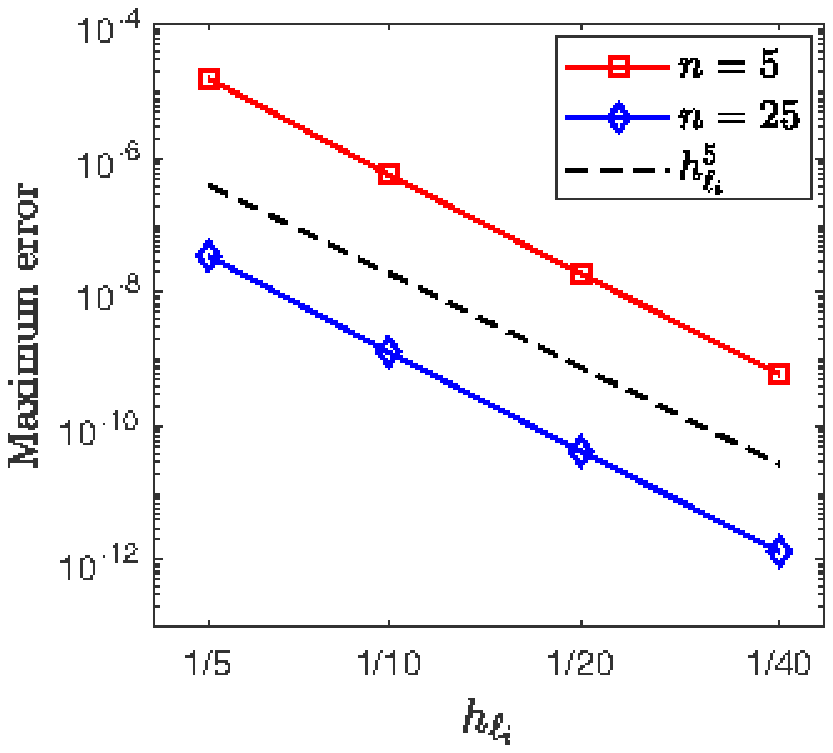}
\hspace{1cm}
\includegraphics[trim = 0cm 0cm 0.5cm 0.25cm, clip = true, height=5cm]{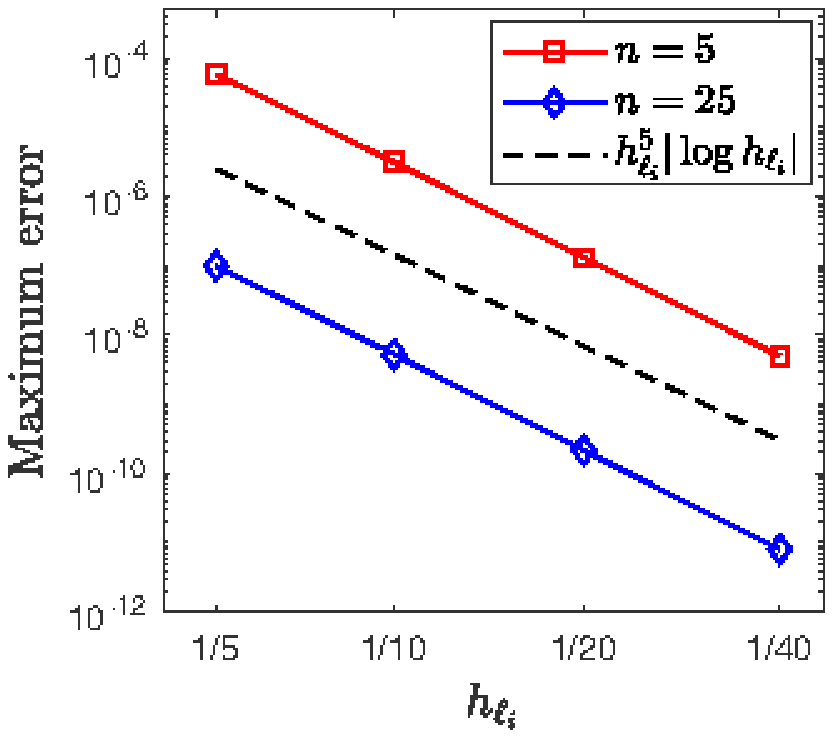}
\caption{Maximum errors of $|e(h_{\ell_i})|$ (left) and of $|E(h_{\ell_i},s)|$ (right) with respect to the mesh sizes $h_{\ell_i}$ and $d=p=2$.}
\label{fig:singIntConvergence}
\end{figure}
A more detailed analysis of  the features of the derived QI based quadrature schemes is given in a forthcoming paper\footnote{A. Falini, T. Kandu\v{c}. A study on spline quasi-interpolation based quadrature rules for the isogeometric Galerkin BEM, \emph{in preparation}.}, 
where also a comparison with some other quadrature methods is done.

\section{Numerical examples}\label{sec:num}

In this section we test our model to numerically solve three Laplace boundary value problems. 
For all examples the initial spaces and meshes are constructed on uniformly spaced meshes.
A local dyadic refinement procedure is steered by the residual based error estimator and D\"{o}rfler marking strategy.

\subsection{Crack problem on a slit}

In this example we focus on a crack problem reported in \cite{feischl2015reliable}.
The slit in Figure~\ref{fig:crack}(a) is defined as $\Gamma = [-1,1]\times \left\{0\right\}$. For $\x=(x_1,x_2)\in\RR^2$ and the right-hand side $f(x_1,0) = -x_1/2$, the exact solution of the Symm's equation is equal to
$\phi(x_1,0) =  -{x_1}\, {({1-x_1^2})^{-1/2}}$, which has poles at $x_1=-1,1$ (see Figure~\ref{fig:crack}(d)). The same color gradient is used in Figure~\ref{fig:crack}(a) and Figure~\ref{fig:crack}(d) to match the corresponding points in the physical and the parametric domain. Since $\phi\in H^{-1/2}(\Gamma) \backslash L^2(\Gamma)$ we measure the error of the approximated solution in the energy norm $\vertiii{\bullet}$ induced by the elliptic operator $V$,
$$
\vertiii{\phi}^2:= \langle V\phi,\phi\rangle_{L^2(\Gamma)}.
$$
Orthogonality of the approximated Galerkin solution with respect to the exact one allows us to compute the error by the formula $\vertiii{\phi - \phi_h}= \sqrt{\vertiii{\phi}^2 - \vertiii{\phi_h}^2}$ and it can be easily checked that $\vertiii{\phi}^2 = \pi/4$. Energy norm of the approximated solution is computed by the formula $\vertiii{\phi_h}^2 = \langle V\phi_h,\phi_h \rangle_{L^2(\Gamma)} = \bfax^T \bfbx_1$.

\begin{figure}[t!]
\centering
\subfigure[Geometry]{
\includegraphics[trim = 0.6cm 0cm .75cm 0cm, clip = true, height=2.35cm]{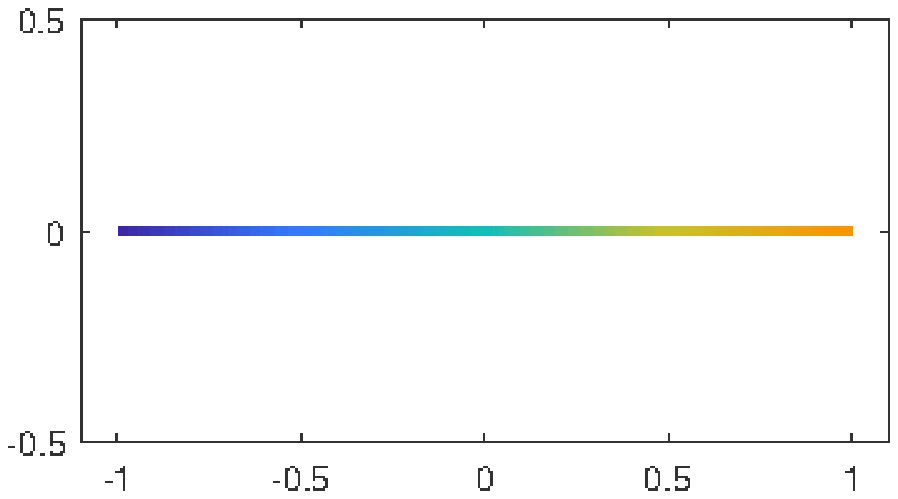}
}
\subfigure[Initial mesh]{
\includegraphics[trim = 0.6cm 0cm .75cm 0cm, clip = true, height=2.35cm]{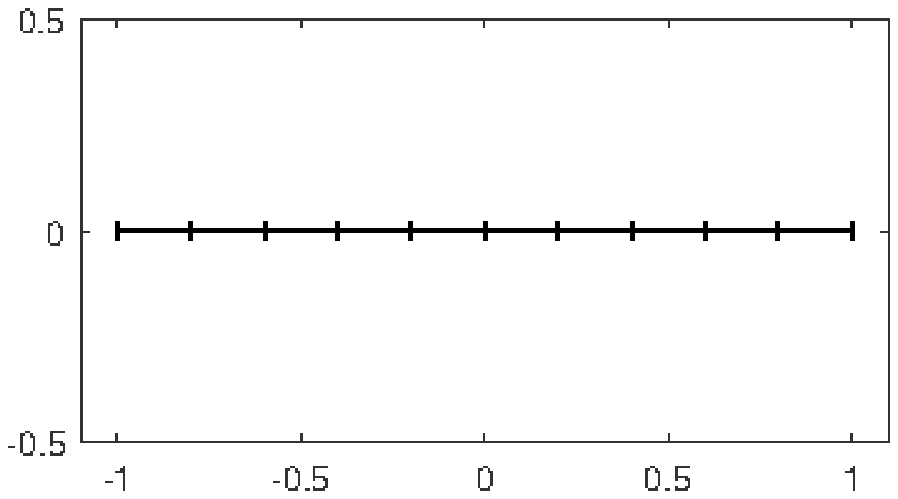}
}
\subfigure[Hierarchical mesh ($10^{\rm th}$ iteration)]{
\includegraphics[trim =0.6cm 0cm .75cm 0cm, clip = true, height=2.35cm]{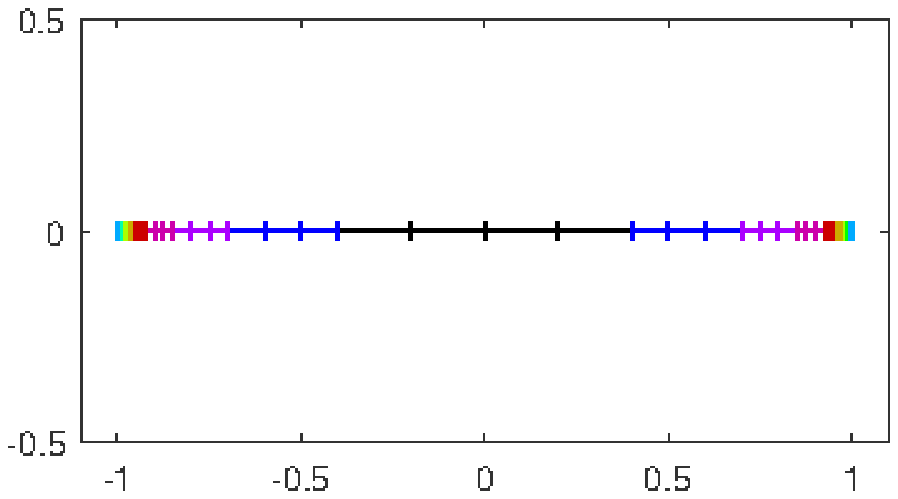}
}
\subfigure[Exact solution]{
\includegraphics[trim = 0cm 0cm .5cm 0.25cm, clip = true, height=4.5cm]{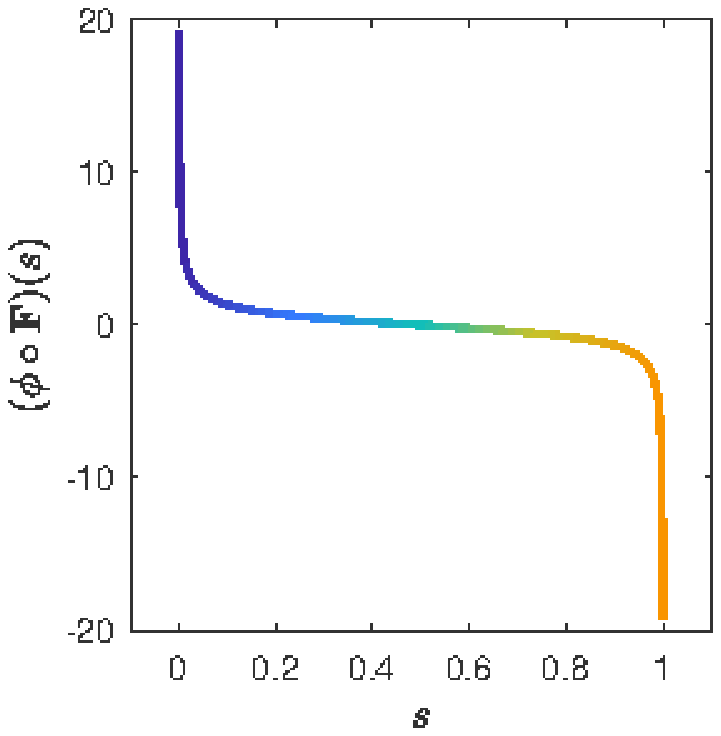}
}
\hspace{1cm}
\subfigure[Convergence]{
\includegraphics[trim = 0cm 0cm .5cm 0.25cm, clip = true, height=4.5cm]{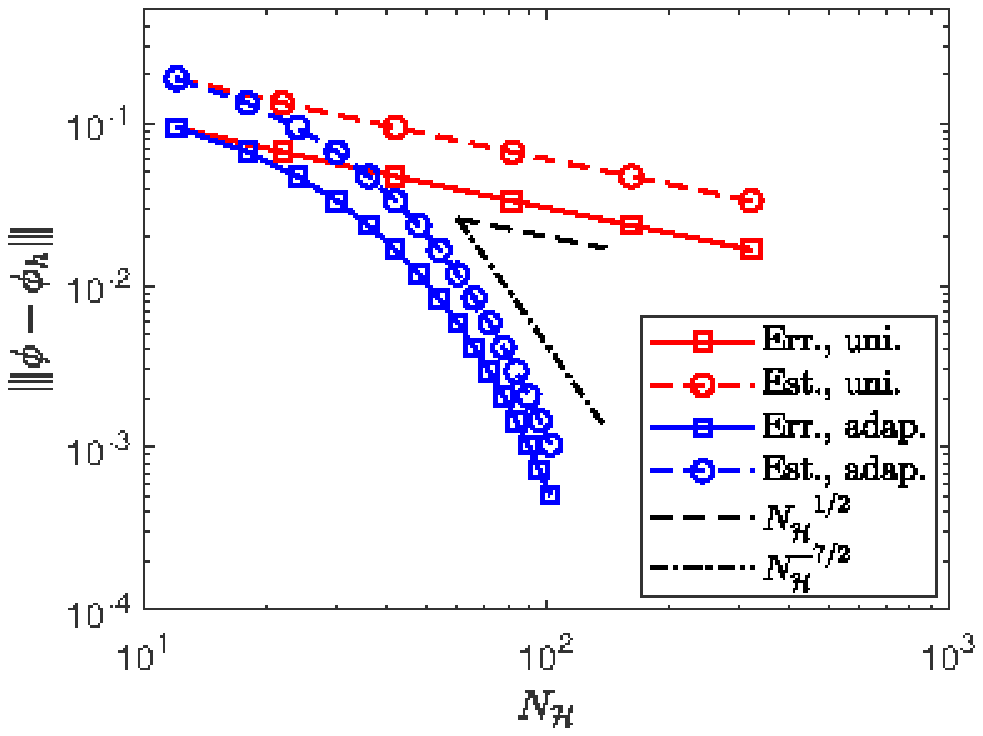}
}
\caption{Crack problem on a slit.}
\label{fig:crack}
\end{figure}

The approximated solution of the problem is sought in the space of quadratic hierarchical splines ($d=2$) with the uniform extended knot vector $
T  = \Big( 0, 0, 0, 1/5,\dots, 4/5, 1,\\ 1, 1 \Big)$, and the geometry $\Gamma$ defined by the control points 
{
\begin{align*}
D &=
\begin{bmatrix}
-1&	- 4/5&	-2/5&	0&		2/5&	4/5&	1\\
0&		0&		0&		0&	0&		0&		0
\end{bmatrix}.
\end{align*}
}
The initial mesh is depicted in Figure~\ref{fig:crack}(b).

Since the geometry is parametrized by a linear function, the parametric speed is constant, $J \equiv {2}$. Hence the inner integrals \eqref{IK12} of the system matrix $A$ are just the modified moments \eqref{modmom}, described in Section~\ref{Qsingsplit}. Thus to solve them accurately it is enough to take a small number of quadrature nodes, for example $n = 6$, 
and to use the quadratic quasi--interpolant splines ($p=2$). A higher value, $n=12$, is necessary for the outer integrals in order to achieve a steady convergence.

In the first test we compare the exact solution with the approximated ones obtained by applying several uniform global refinements. The error of the approximation solution and the corresponding values of the error estimator with respect to the degrees of freedom $N_{\cal H}$ are plotted in Figure~\ref{fig:crack}(e). Due to reduced regularity of the exact solution the expected convergence order of the approximated solution is $-1/2$, which is confirmed by our experiment. Figure~\ref{fig:crack}(e) also shows that the convergence order is greatly improved by applying the adaptive scheme with the local refinement strategy and D\"{o}rfler marking parameter set to $\theta = 1-10^{-2}$. The theoretical optimal convergence order for regular enough solutions, $-7/2$, is recovered after a few refinement steps. At every iteration step, only a few cells at the highest active hierarchical level and near the singularities are refined. The hierarchical mesh is shown in Figure~\ref{fig:crack}(c). Mesh cells of different levels are coloured differently to improve the visibility of the hierarchical mesh. 



\subsection{Pac-Man-like domain}

In the second example we consider a closed domain problem by using the direct approach integral equations \eqref{seconda_BIE}. The boundary $\Gamma$ of the domain is a smooth B-spline circular sector, sometimes referred to as the Pac-Man geometry, see Figure~\ref{fig:smoothPacman}(a). It is described by cubic B-splines on the uniform extended knot vector 
$\mathbf T = (-9/6,\ -8/6, \ldots,
8/6,\ 9/6)
$
and the following control points:
\begin{align*}
D &=
\large \left[
\begin{smallmatrix}
-1&	-1/3&	2/5&	7/8&	7/8&	-1/25&	-1/25&	7/8&	7/8&	2/5& 	-1/3&	-1&	-1&	-1/3&	2/5\\
-1/2&	-1&	-1&	-1/2&	-1/2&	0&   0&		1/2&	1/2&	1&		1&		1/2&	-1/2&		-1&	-1
\end{smallmatrix}\right].
\end{align*}
The boundary $\Gamma$ is described by $s \in [-1,1]$ in the parametric space. The initial mesh is shown in Figure~\ref{fig:smoothPacman}(b).

Following the construction from \cite{feischl2015reliable} the exact solution of the Laplace equation is set to
\begin{align*}
u(r,\vartheta) =  -r^{1/2} \cos \frac {\vartheta + \pi} 2,
\end{align*}
written in polar coordinates, with $x_1 =r \cos \vartheta, x_2 = r \sin \vartheta$, for $r>0$ and $\vartheta\in (0,2\pi)$. 
The exact solution of the integral equation is equal to 
\begin{align*}
\phi(r,\vartheta) = \frac 1 2 r^{-1/2}
\begin{bmatrix}
\displaystyle -\sin \frac \vartheta 2 & \displaystyle \cos \frac \vartheta 2
\end{bmatrix}
\boldsymbol n(r,\vartheta),
\end{align*}
which exhibits a singular point at the origin $r=0$. Note that the singular point lies outside our domain $\Omega$ and so we can use the standard $L^2$ norm to measure the error. Nevertheless, the exact solution $\phi$ has a strong feature near the value $s = -1/4$ in the parametric space, as seen in Figure~\ref{fig:smoothPacman}(d).

The mentioned feature prevents the global uniform refinement strategy to recover the optimal convergence order in the first refinement steps (Figure~\ref{fig:smoothPacman}(e)). Due to sharper corners of the geometry, we need to set $n=36$ 
for the outer quadrature scheme, and $n = 12$ 
for the inner, and it is enough to employ the quadratic QI splines. The optimal convergence order 4 of the $L^2$ error is recovered, when we employ the adaptive refinement scheme with the threshold parameter $\theta = 4/5$. The error estimator and the marking strategy correctly steer the mesh refinement near the three corners, as depicted in Figure~\ref{fig:smoothPacman}(c).

\begin{figure}[t!]
\centering
\subfigure[Geometry]{
\includegraphics[trim = 0cm 0cm .5cm 0cm, clip = true, height=4.25cm]{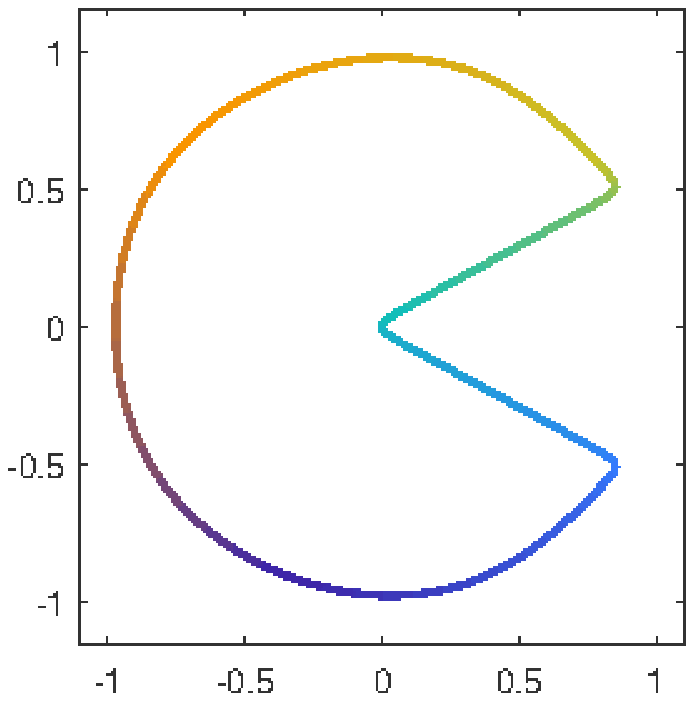}
}
\subfigure[Initial mesh]{
\includegraphics[trim = 0cm 0cm .5cm 0cm, clip = true, height=4.25cm]{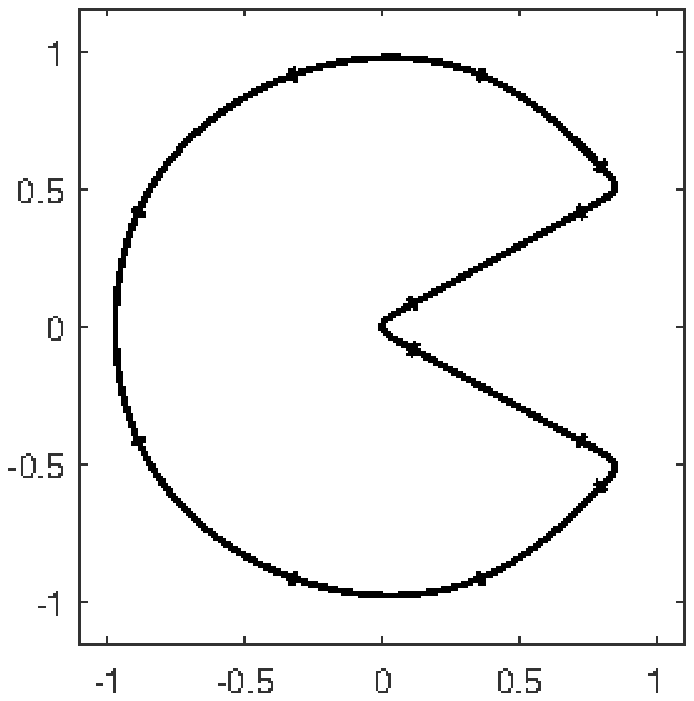}
}
\subfigure[Hierarchical mesh ($6^{\rm th}$ iteration)]{
\includegraphics[trim = 0cm 0cm .5cm 0cm, clip = true, height=4.25cm]{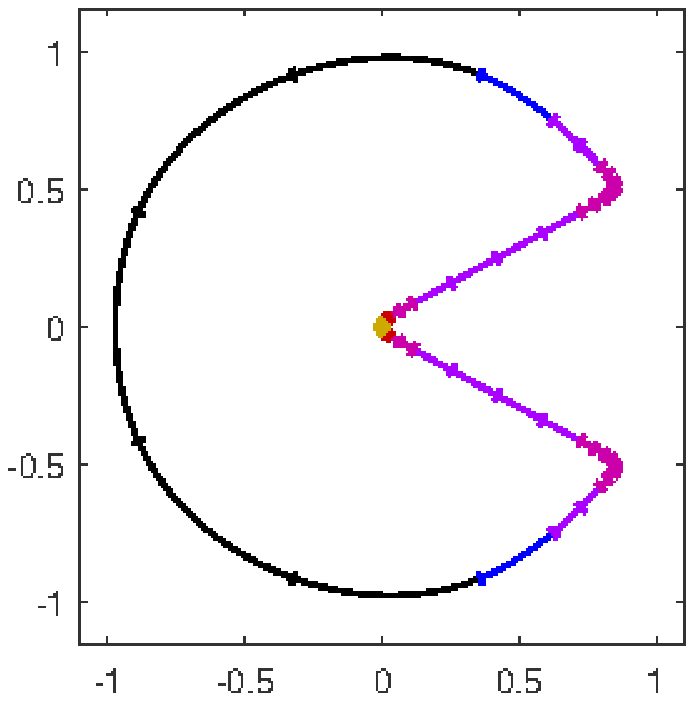}
}
\subfigure[Exact solution]{
\includegraphics[trim = 0cm 0cm .5cm 0.25cm, clip = true, height=4.5cm]{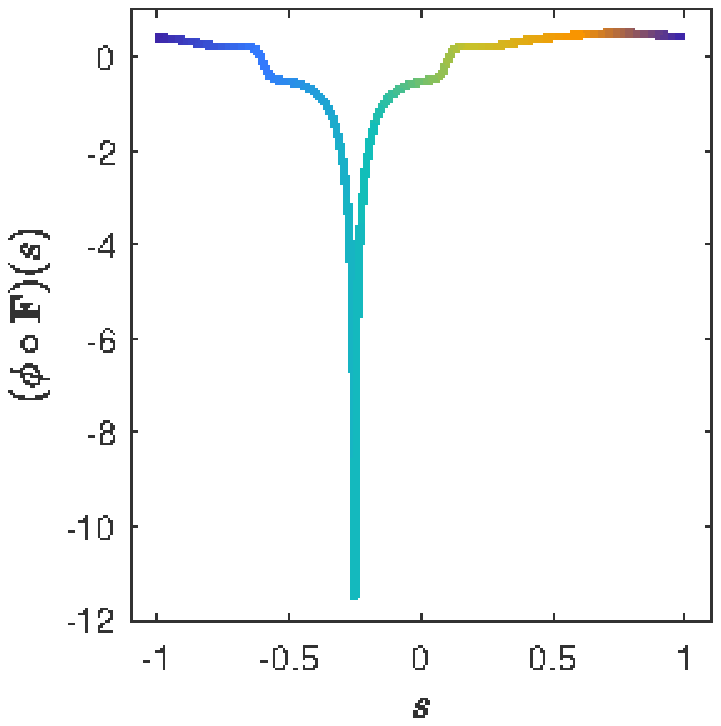}
}
\hspace{1cm}
\subfigure[Convergence]{
\includegraphics[trim = 0cm 0cm .5cm 0.25cm, clip = true, height=4.5cm]{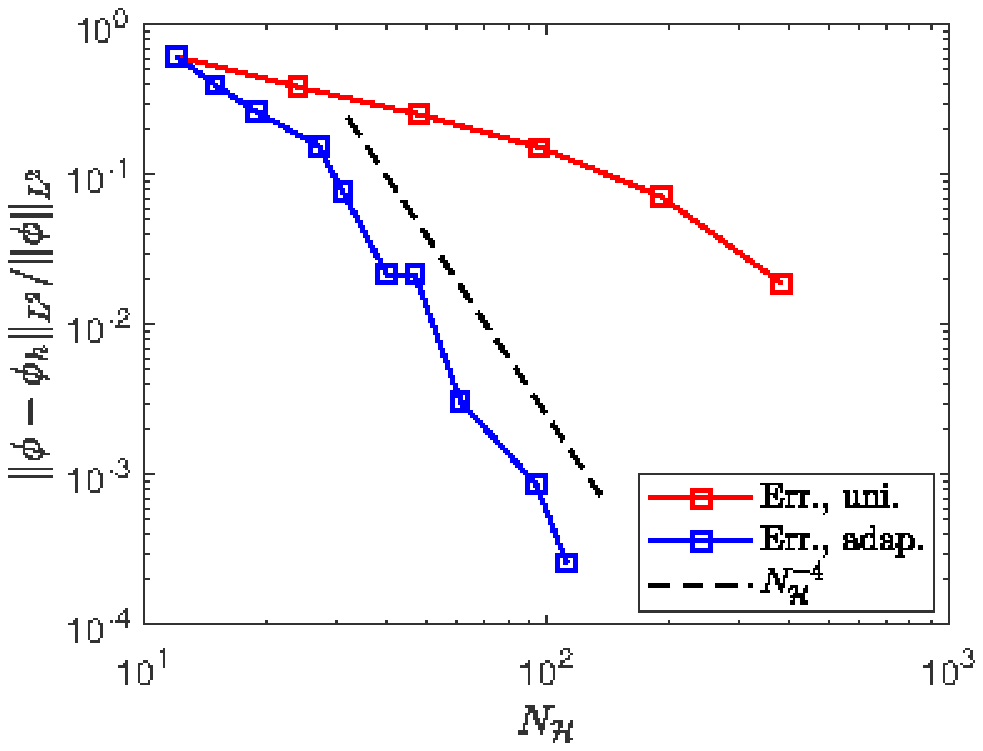}
}
\caption{Pac-Man-like domain problem.}
\label{fig:smoothPacman}
\end{figure}


\subsection{L-shaped domain}

In the last example we study a smooth L-shaped domain in Figure~\ref{fig:Ldomain}(a). It is parametrized by a cubic B-spline curve, defined on the parametric interval $[-1,1]$, uniform extended knot vector $\mathbf T = (-13/10,\ -12/10,\ldots
\ -1/10,\ 0)$, 
and control points
{
\begin{align*}
%
%
D &= 
\Large \left[
\begin{smallmatrix}
0&		0&		0&		0&		-\varepsilon&	-\bar \varepsilon&	-1&		-1&		-1&		-1&		-1&		 -\bar \varepsilon&	0&		\bar \varepsilon&	1&		1&		1&		1&		\bar \varepsilon&	\varepsilon&	0&		0& 	0\\
0&		\varepsilon&	\bar \varepsilon&	1&		1&		1&		1&		\bar \varepsilon&	0&		-\bar \varepsilon&	-1&	-1&	-1&	-1&	-1&	-\bar \varepsilon&		-\varepsilon&		0&		0&		0&		0&		\varepsilon&		\bar \varepsilon
\end{smallmatrix}\right],
\end{align*}
}%
for $\varepsilon = 1/50$ and $\bar \varepsilon = 49/50$. The initial mesh is presented in Figure~\ref{fig:Ldomain}(b).

For $\boldsymbol x \in \RR^2 \backslash \{\boldsymbol \delta \}$ the exact solution $u$ of the Laplace equation is set to
\begin{align*}
 u(\boldsymbol x) = \frac 1 2 \log \|\boldsymbol x+\boldsymbol \delta\|_2^2.
\end{align*}
The exact solution of the Symm's equation then reads
\begin{align*}
\phi(\boldsymbol x) = \frac{(\boldsymbol x+\boldsymbol \delta)^T\,  \boldsymbol n(\boldsymbol x)}{\|\boldsymbol x+\boldsymbol \delta\|_2^2}.
\end{align*}
We set  $\boldsymbol \delta= -1/250\  (1, 1)^T$ so that  the singular point $\boldsymbol x = \boldsymbol \delta$ is outside the domain $\Omega$ and we can measure the error of the approximated solution in $L^2$ norm. As seen in Figure~\ref{fig:Ldomain}(d), the values of the exact solution $\phi$ decrease rapidly near $s = 9/10$; a region where the mesh of the approximated solution should be refined.

\begin{figure}[t!]
\centering
\subfigure[Geometry]{
\includegraphics[trim = 0cm 0cm .5cm 0cm, clip = true, height=4.25cm]{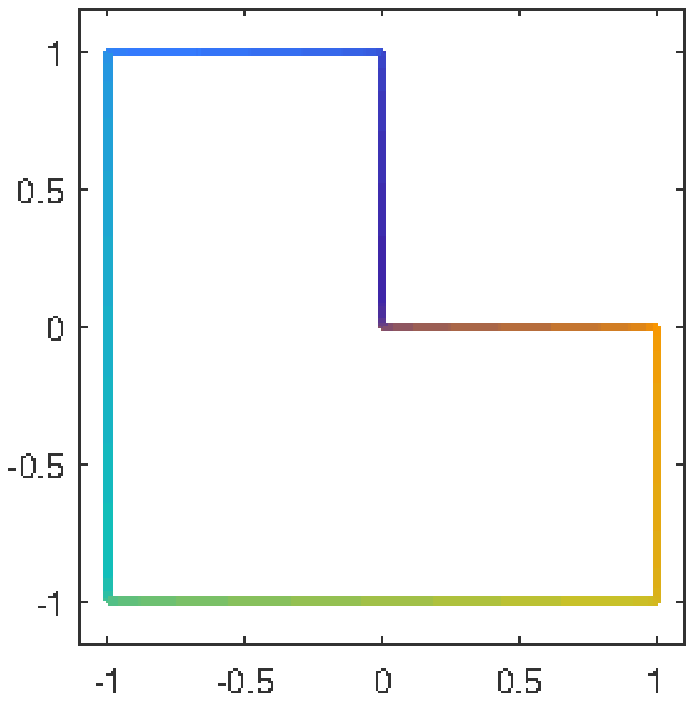}
}
\subfigure[Initial mesh]{
\includegraphics[trim = 0cm 0cm .5cm 0cm, clip = true, height=4.25cm]{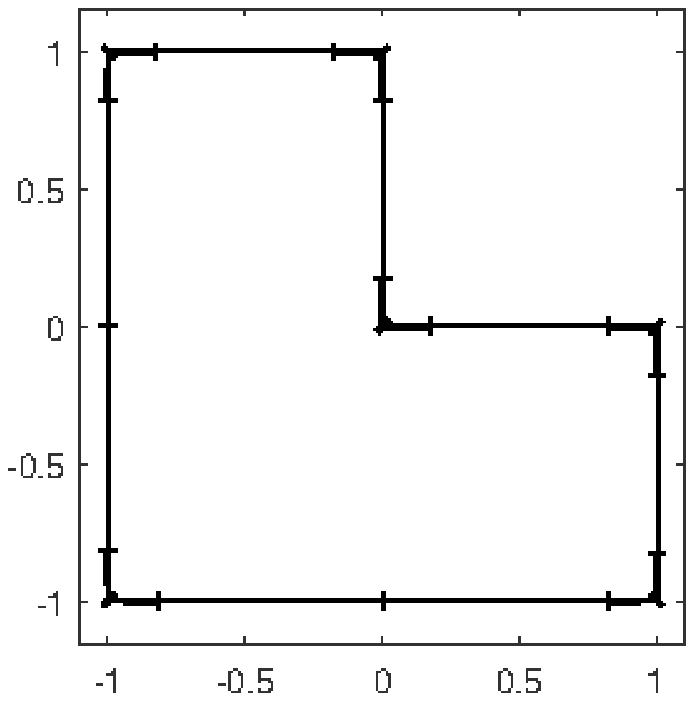}
}
\subfigure[Hierarchical mesh ($6^{\rm th}$ iteration)]{
\includegraphics[trim = 0cm 0cm .5cm 0cm, clip = true, height=4.25cm]{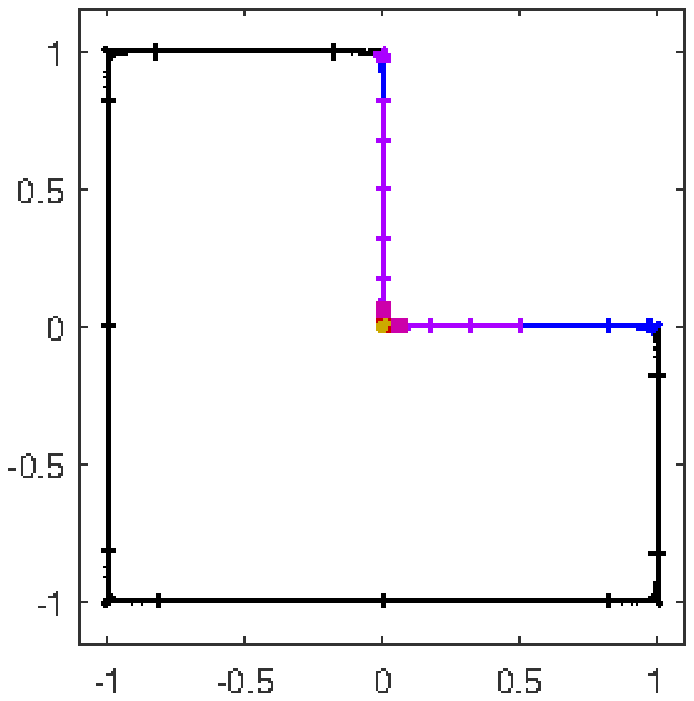}
}
\subfigure[Exact solution]{
\includegraphics[trim = 0cm 0cm .5cm 0.25cm, clip = true, height=4.5cm]{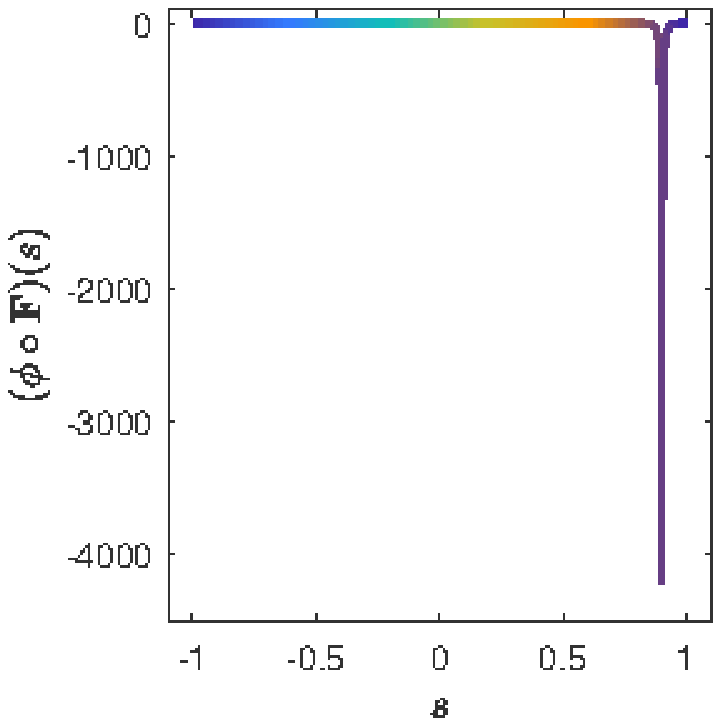}
}
\hspace{1cm}
\subfigure[Convergence]{
\includegraphics[trim = 0cm 0cm .5cm 0.25cm, clip = true, height=4.5cm]{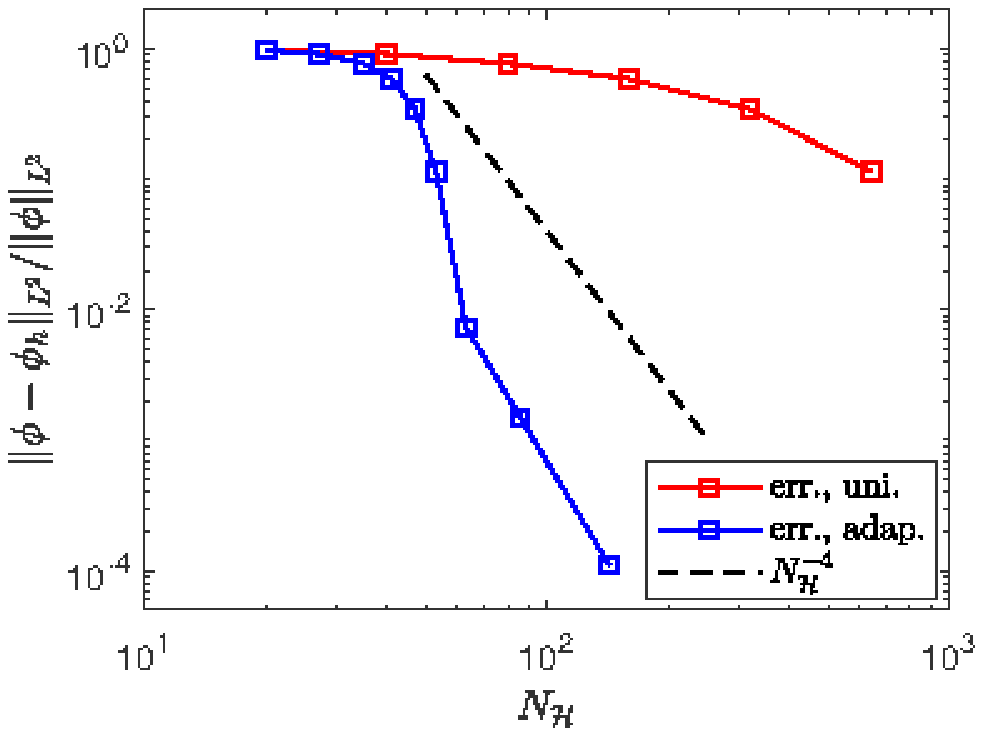}
}
\caption{The L-shaped domain problem.}
\label{fig:Ldomain}
\end{figure}

We set the quadrature parameters to $n=12$ 
for both the inner and the outer rule and we choose the quadratic QI splines. In Figure~\ref{fig:Ldomain}(e) we can observe that with the global uniform refinement strategy we get the sub optimal convergence. Instead, by setting the marking parameter to $\theta=99/100$ we recover the optimal convergence order with the adaptive scheme after a few iterations. The mesh is refined mainly near the corner $(0, 0)^T$, as depicted in Figure~\ref{fig:Ldomain}(c).


\section{Conclusion} \label{sec:conc}
We developed an adaptive IgA-BEM with hierarchical B-splines and high order quadrature schemes based on spline quasi--interpolation. Such kind of formulas do not require to split the considered integral into a sums of integrals on mesh cells, as it is commonly done when dealing with the Lagrangian basis.
Moreover, as the quadrature schemes are tailored on B-spines, the nodes can be taken on the support of each basis function. By using uniform knot sequences at any hierarchical level, the computation of the quadrature rules is also highly simplified. Implementation of the hierarchical structure leads to an effective adaptive IgA--BEM model.
The numerical results confirm that the local nature of the new quadrature rules, based on quasi-interpolation, perfectly fits within the adaptive hierarchical spline framework.

\section*{Acknowledgements}
This work was partially supported by the MIUR “Futuro in Ricerca” programme through the project DREAMS (RBFR13FBI3).
The authors are members of the INdAM Research group GNCS. The INdAM support through GNCS and Finanziamenti Premiali SUNRISE is gratefully acknowledged.

\section*{References}


\bibliography{BEMquad}

\begin{thebibliography}{10}
\expandafter\ifx\csname url\endcsname\relax
  \def\url#1{\texttt{#1}}\fi
\expandafter\ifx\csname urlprefix\endcsname\relax\def\urlprefix{URL }\fi
\expandafter\ifx\csname href\endcsname\relax
  \def\href#1#2{#2} \def\path#1{#1}\fi

\bibitem{BEMbook}
S.~Sauter, C.~Schwab, Boundary element methods, Vol.~39 of Springer Series in
  Computational Mathematics, Springer-Verlag, Berlin, Heidelberg, 2011.

\bibitem{ref2}
T.~Hughes, J.~Cottrell, Y.~Bazilevs, Isogeometric analysis: Cad, finite
  elements, nurbs, exact geometry and mesh refinement, Comput Methods Appl Mech
  Engrg 194~(39-41) (2005) 4135--4195.

\bibitem{LibroHughes}
J.~Cottrell, T.~Hughes, Y.~Bazilevs, Isogeometric analysis: toward integration
  of CAD and FEA, John Wiley \& Sons, 2009.

\bibitem{TauRodHug}
M.~Taus, G.~Rodin, T.~Hughes, Isogeometric analysis of boundary integral
  equations: High-order collocation methods for the singular and hyper-singular
  equations, Math Models and Methods in Appl Sci 26~(8) (2016) 1447--1480.

\bibitem{Ginnis}
A.~Ginnis, K.~Kostas, C.~Politis, P.~Kaklis, K.~Belibassakis, T.~P.
  Gerostathis, M.~Scott, T.~Hughes, Isogeometric boundary-element analysis for
  the wave-resistance problem using t-splines, Comput Methods Appl Mech Engrg
  279 (2014) 425--439.

\bibitem{Simp2012}
R.~Simpson, S.~Bordas, J.~Trevelyan, T.~Rabczuk, A two-dimensional isogeometric
  boundary element method for elastostatic analysis, Comput Methods Appl Mech
  Engrg 209--212 (2012) 87--100.

\bibitem{Coox}
L.~Coox, O.~Atak, D.~Vandepitte, W.~Desmet, An isogeometric indirect boundary
  element method for solving acoustic problems in open-boundary domains, Comput
  Methods Appl Mech Engrg 316 (2017) 186--208.

\bibitem{SimpsonScott}
R.~Simpson, M.~Scott, M.~Taus, D.~Thomas, H.~Lian, Acoustic isogeometric
  boundary element analysis, Comput Methods Appl Mech Engrg 269 (2014)
  265--290.

\bibitem{politis}
C.~Politis, A.~Papagiannopoulos, K.~Belibassakis, P.~Kaklis, K.~Kostas,
  A.~Ginnis, T.~Gerostathis, An isogeometric bem for exterior potential-flow
  problems around lifting bodies, in: E.~Onate, J.~Oliver, A.~Huerta (Eds.),
  11th World Congress on Computational Mechanics (WCCM XI), International
  Center for Numerical Methods in Engineering (CIMNE), Barcelona, Spain, 2014,
  pp. 2433--2444.

\bibitem{Joneidi}
A.~Joneidi, C.~Verhoosel, P.~Anderson, Isogeometric boundary integral analysis
  of drops and inextensible membranes in isoviscous flow, Computers \& Fluids
  109 (2015) 49--66.

\bibitem{Falini}
A.~Falini, J.~\v{S}peh, B.~J{\"u}ttler, Planar domain parameterization with
  thb-splines, Comput Aided Geom Design 35 (2015) 95--108.

\bibitem{ADSS1}
A.~Aimi, M.~Diligenti, M.~Sampoli, A.~Sestini, Isogeometric {A}nalysis and
  {S}ymmetric {G}alerkin {BEM}: a 2{D} numerical study, Appl Math Comp 272
  (2016) 173--186.

\bibitem{ADSS2}
A.~Aimi, M.~Diligenti, M.~Sampoli, A.~Sestini, Non-polynomial spline
  alternatives in {I}sogeometric {S}ymmetric {G}alerkin {BEM}, Appl Numer Math
  116 (2017) 10--23.

\bibitem{Nguyen16}
B.~Nguyen, H.~Tran, C.~Anitescu, X.~Zhuang, T.~Rabczuk, An isogeometric
  symmetric galerkin boundary element method for two--dimensional crack
  problems, Comput Methods Appl Mech Engrg 306 (2016) 252--275.

\bibitem{ACDS3}
A.~Aimi, F.~Calabr\`o, M.~Diligenti, M.~Sampoli, G.~Sangalli, A.~Sestini,
  Efficient assembly based on {B}-spline tailored quadrature rules for the
  {IgA-SGBEM}, Comput Methods Appl Mech Engrg 331 (2018) 327--342.

\bibitem{feischl2015arcme}
M.~Feischl, T.~F\"uhrer, N.~Heuer, M.~Karkulik, D.~Praetorius, {Adaptive
  Boundary Element Methods}, Arch Computat Methods Eng 22 (2015) 309--389.

\bibitem{feischl2015reliable}
M.~Feischl, G.~Gantner, D.~Praetorius, Reliable and efficient a posteriori
  error estimation for adaptive iga boundary element methods for
  weakly-singular integral equations, Comput Methods Appl Mech Engrg 290 (2015)
  362--386.

\bibitem{feischl2016}
M.~Feischl, G.~Gantner, A.~Haberl, D.~Praetorius, {Adaptive 2D IGA boundary
  element methods}, Eng Anal Bound Elem 62 (2016) 141--153.

\bibitem{feischl2017}
M.~Feischl, G.~Gantner, A.~Haberl, D.~Praetorius, {Optimal convergence for
  adaptive IGA boundary element methods for weakly-singular integral
  equations}, Numer Math 136 (2017) 147--182.

\bibitem{vuong2011}
A.-V. Vuong, C.~Giannelli, B.~J\"{u}ttler, B.~Simeon, {A hierarchical approach
  to adaptive local refinement in isogeometric analysis}, Comput Methods Appl
  Mech Engrg 200 (2011) 3554--3567.

\bibitem{LS75}
T.~Lyche, L.~Schumaker, Local splines approximation methods, J Approx Theory 15
  (1975) 294--325.

\bibitem{MSbit09}
F.~Mazzia, A.~Sestini, The {BS} class of {H}ermite spline quasi-interpolants on
  nonuniform knot distributions, BIT 49~(3) (2009) 611--628.

\bibitem{MS12}
F.~Mazzia, A.~Sestini, Quadrature formulas descending from {BS} {H}ermite
  spline quasi-interpolation, J Comput Appl Math 236 (2012) 4105--4118.

\bibitem{CFSS18}
F.~Calabr\`o, A.~Falini, M.~Sampoli, A.~Sestini, Efficient quadrature rules
  based in spline quasi-interpolation for application to {IgA-BEM}s, J Comput
  Appl Math 338 (2018) 153--167.

\bibitem{rabinowitz1990numerical}
P.~Rabinowitz, Numerical integration based on approximating splines, J Comput
  Appl Math 33~(1) (1990) 73--83.

\bibitem{dagnino1993numerical}
C.~Dagnino, V.~Demichelis, E.~Santi, Numerical {I}ntegration {B}ased on
  {Q}uasi-{I}nterpolating {S}plines, Computing 50~(2) (1993) 149--163.

\bibitem{dagnino1997product}
C.~Dagnino, P.~Rabinowitz, Product integration of singular integrands using
  quasi-interpolatory splines, Computers Math Applic 33~(1) (1997) 59--67.

\bibitem{demichelis1996quasi}
V.~Demichelis, Quasi-interpolatory splines based on schoenberg points, Math
  Comp 65~(215) (1996) 1235--1247.

\bibitem{Stephan84}
E.~Stephan, W.~Wendland, An augmented galerkin procedure for the boundary
  integral method applied to two-dimensional screen and crack problems, Appl
  Anal 18~(3) (1984) 183--219.

\bibitem{chen}
G.~Chen, J.~Zhou, Boundary element methods, 2nd Edition, World Scientific
  Publishing Co., Inc. River Edge, NJ, USA, 2010.

\bibitem{Wendland2}
W.~Wendland, On some mathematical aspects of boundary element methods for
  elliptic problems, in: J.~Whiteman (Ed.), The Mathematics of Finite Elements
  and Applications, Vol.~5 of Mafelap 1984, Academic Press Ltd, London, 1985,
  pp. 193--227.

\bibitem{deBoor01}
C.~de~Boor, A practical guide to splines, revised Edition, Vol.~27 of Applied
  Mathematical Sciences, Springer-Verlag, New York, 2001.

\bibitem{giannelli2012}
C.~Giannelli, B.~J\"{u}ttler, H.~Speleers, {THB-splines: The truncated basis
  for hierarchical splines}, Comput Aided Geom Design 29 (2012) 485--498.

\bibitem{giannelli2016}
C.~Giannelli, B.~J\"{u}ttler, S.~Kleiss, A.~Mantzaflaris, B.~Simeon,
  J.~\v{S}peh, {THB-splines: An effective mathematical technology for adaptive
  refinement in geometric design and isogeometric analysis}, Comput Methods
  Appl Mech Engrg 299 (2016) 337--365.

\bibitem{kanduc2017}
T.~Kandu\v{c}, C.~Giannelli, F.~Pelosi, H.~Speleers, {Adaptive isogeometric
  analysis with hierarchical box splines}, Comput Methods Appl Mech Engrg 316
  (2017) 817--838.

\bibitem{schillinger2012}
D.~Schillinger, L.~Ded\`e, M.~Scott, J.~Evans, M.~Borden, E.~Rank, T.~Hughes,
  {An isogeometric design-through-analysis methodology based on adaptive
  hierarchical refinement of NURBS, immersed boundary methods, and T- spline
  CAD surfaces}, Comput Methods Appl Mech Engrg 249-252 (2012) 116--150.

\bibitem{carstensen2001mathematical}
C.~Carstensen, B.~Faermann, Mathematical foundation of a posteriori error
  estimates and adaptive mesh-refining algorithms for boundary integral
  equations of the first kind, Eng Anal Bound Elem 25~(7) (2001) 497--509.

\bibitem{dorfler1996convergent}
W.~D{\"o}rfler, A convergent adaptive algorithm for poisson's equation, SIAM J
  Numer Anal 33~(3) (1996) 1106--1124.

\bibitem{Morken91}
K.~M{\o}rken, Some identities for products and degree raising of splines, Contr
  Approx 7 (1991) 195--208.

\bibitem{AD2002}
A.~Aimi, M.~Diligenti, Numerical integration in 3{D} galerkin {BEM} solution of
  {HBIE}s, Comput Mech 28 (2002) 233--249.

\end{thebibliography}

\end{document}